\newcommand{\field}[1]{\mathbb{#1}}
\def\H{\mathbb{H}}
\newcommand{\Z}{\field{Z}}
\newcommand{\R}{\field{R}}
\newcommand{\C}{\field{C}}
\newcommand{\leg}[2]{\genfrac{(}{)}{}{}{#1}{#2}}
\newcommand\spt {\operatorname{spt}}
\newcommand\sptone {\operatorname{spt1}}
\newcommand\mtwospt {\operatorname{M2spt}}
\numberwithin{equation}{section}
\newtheorem{theorem}{Theorem}[section]
\newtheorem{proposition}[theorem]{Proposition}
\theoremstyle{remark}
\newtheorem*{remark}{Remark}
\newtheorem*{ack}{Acknowledgements}
\renewenvironment{proof}[1][Proof]{\begin{trivlist}
\item[\hskip \labelsep {\bfseries #1:}]}{\qed\end{trivlist}}
\title{$\ell$-adic properties of smallest parts functions}
\author{Scott Ahlgren, Kathrin Bringmann, and Jeremy Lovejoy}
\address{Department of Mathematics\\
University of Illinois\\
Urbana, IL 61801}
\email{ahlgren@math.uiuc.edu}
\address{Mathematical Institute, University of Cologne, Weyertal 86-90, 50931 Cologne, Germany}
\email{kbringma@math.uni-koeln.de}
\address{CNRS, LIAFA, Universit\'e Denis Diderot - Paris 7, Case 7014, 75205 Paris Cedex 13, France}
\email{lovejoy@liafa.jussieu.fr}
\thanks{The second author was partially supported by NSF grant DMS-0757907 and by the Alfried Krupp prize.}
\subjclass[2010]{11P83, 11F37}
\keywords{congruences, partitions, overpartitions, smallest parts functions, mock modular forms, harmonic weak Maass forms, Hecke operators}
\begin{document}
\begin{abstract}
We prove explicit congruences modulo powers of arbitrary primes for three smallest parts functions: one for partitions, one for overpartitions, and one for partitions without repeated odd parts.  The proofs depend on $\ell$-adic properties of certain modular forms and mock modular forms of weight $3/2$ with respect to the   Hecke operators $T(\ell^{2m})$.
\end{abstract}
\maketitle
\section{Introduction}
Let $p(n)$ denote the number of partitions of $n$ and let $\spt(n)$ denote the number of smallest parts in the partitions of $n$.  For example, there are $7$ partitions of $5$,
\[
5,\, 4+1,\,3+2,\,3+1+1,\,2+2+1,\,2+1+1+1,\,1+1+1+1+1,
\]
and so $\spt(5) = 14$.  The generating function  \cite[Theorem 4]{An1} is given by
\[
\sum_{n \geq 1} \spt(n)q^n = \left(\prod_{n \geq 1} \frac{1}{1-q^n}\right)\left(\sum_{n \geq 1} \frac{nq^n}{1-q^n} + \sum_{n \in \mathbb{Z} \backslash \{0\}} \frac{(-1)^nq^{n(3n+1)/2}}{(1-q^n)^2}\right).
\]
The smallest parts function was introduced by Andrews \cite{An1}, who proved that
it satisfies the following Ramanujan-type congruences:
\begin{eqnarray}
\spt(5n+4) &\equiv& 0 \pmod{5}, \label{sptmod5} \\
\spt(7n+5) &\equiv& 0 \pmod{7}, \\
\spt(13n+6) &\equiv& 0 \pmod{13} \label{sptmod13} .
\end{eqnarray}

A number of studies of explicit congruences for $\spt(n)$ quickly followed Andrews' work.
Garvan \cite{Ga.5} produced explicit (and much more intricate)  congruences modulo other small primes.
Folsom and Ono  \cite{Fo-On1} studied $\spt(n)$ modulo $2$ and $3$, showing in particular
that the generating function of $\spt(n)$ is an eigenform modulo $3$ for the weight $3/2$ Hecke operators.
Garvan \cite{Ga2} showed that the generating function is in fact an eigenform modulo $72$.
 Ono \cite{On1} found systematic congruences for $\spt(n)$ modulo $\ell$ for any prime $\ell \geq 5$.
He proved that  when   $(\frac{-n}\ell)=1$ we have
\begin{equation} \label{Ono}
 \spt\left(\frac{\ell^2  n +  1}{24}\right) \equiv 0 \pmod{\ell},
\end{equation}
and that for all $n$ we have
\begin{equation} \label{Ono1}
 \spt\left(\frac{\ell^3  n +  1}{24}\right) \equiv \leg{3}{\ell} \spt\left(\frac{\ell  n +  1}{24}\right)  \pmod{\ell}.
\end{equation}

Finally,  Garvan \cite{Ga1} found systematic extensions of the congruences \eqref{sptmod5}-\eqref{sptmod13}, obtaining
results which are analogous to Ramanujan's partition congruences modulo powers of  $5$, $7$, and $11$.
If $\ell\geq 5$ is prime and $m\geq 1$, then denote by  $\delta_{\ell, m}$   the least positive integer with  $24\delta_{\ell, m}\equiv 1\pmod{\ell^m}$.
As an example of Garvan's results        modulo powers of $5$, $7$, and $13$, we have the following congruence for  each $m\geq 3$:
\begin{equation}\label{frank1}
\spt\left(5^m n+\delta_{5, m}\right)+5 \spt\left(5^{m-2}n+\delta_{5, m-2}\right)\equiv 0\pmod{5^{2m-3}}.
\end{equation}
Garvan also proved that for each of $\ell=5$, $7$, and $13$ we have
\begin{equation}\label{frank2}
\spt\left(\ell^{m} n+\delta_{\ell,m}\right)\equiv 0\pmod{\ell^{\lfloor\frac{m+1}2\rfloor}}.
\end{equation}

The works of Garvan and Ono provide elegant families of congruences which are infinite in different aspects:
the congruences \eqref{frank1} and  \eqref{frank2}  hold for arbitrary powers of certain primes, while the mod  $\ell$ congruences \eqref{Ono} and \eqref{Ono1} hold for any prime.
In this paper we  exhibit explicit congruences for $\spt(n)$ and   two other smallest parts functions which hold
modulo arbitrary powers of arbitrary primes.   The congruences satisfied by the three functions which we consider are identical in form.
In fact, they all arise for the same reason:  a modular form or mock modular form related to the generating function satisfies a simple congruence
described in terms of the Hecke operators $T(\ell^{2m})$ (see Theorem~\ref{thm1} and Proposition \ref{thm2}).  This is a general phenomenon which will hold
for a much wider class of modular forms and mock modular forms, and is of independent interest.

We begin with the results for $\spt(n)$, which extend \eqref{Ono} and  \eqref{Ono1} to any prime power.

\begin{theorem} \label{thm1bis}
For any prime $\ell \geq 5$ and any integer $m \geq 1$ the following are true:
\begin{itemize}
\item[$(i)$] If $\left(\frac{-n}{\ell}\right) = 1$, then we have
\begin{equation*}
\spt\left(\frac{\ell^{2m}n+1}{24}\right) \equiv 0 \pmod{\ell^m}.
\end{equation*}

\item[$(ii)$] For any $n$ we have
\begin{equation*}
\spt\left(\frac{\ell^{2m+1}n+1}{24}\right) \equiv \left(\frac{3}{\ell}\right) \spt\left(\frac{\ell^{2m-1}n+1}{24}\right) \pmod{\ell^m}.
\end{equation*}
\end{itemize}
\end{theorem}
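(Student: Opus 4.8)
The plan is to deduce Theorem~\ref{thm1bis} from the behaviour, modulo $\ell^m$, of the weight $3/2$ mock modular form attached to $\spt$ under the Hecke operators $T(\ell^{2m})$. Recall (cf.~\cite{On1}) that
\[
\mathcal{M}(z):=\sum_{n\ge1}\bigl(12\,\spt(n)+(24n-1)p(n)\bigr)q^{24n-1}\qquad(q=e^{2\pi i z})
\]
is the holomorphic part of a harmonic weak Maass form $\widehat{\mathcal M}$ of weight $3/2$ on $\Gamma_0(576)$ with character $\leg{12}{\bullet}$, whose shadow is a nonzero multiple of the weight $1/2$ unary theta series $\eta(24z)=\sum_{n\ge1}\leg{12}{n}q^{n^2}$. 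Writing $\mathcal M=\sum_N a(N)q^N$, we have $a(N)\in\Z$, the expansion is supported on $N\equiv-1\pmod{24}$, and $a(24n-1)=12\,\spt(n)+(24n-1)p(n)$.

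First I would reduce the two assertions to congruences for the $a(N)$. Fix $\ell\ge5$ and $m\ge1$, and note $\gcd(12,\ell)=1$. For part $(i)$, with $N=\tfrac{\ell^{2m}n+1}{24}$ one has $24N-1=\ell^{2m}n$, so $(24N-1)p(N)=\ell^{2m}n\,p(N)\equiv0\pmod{\ell^{2m}}$; hence $(i)$ is equivalent to
\[
a(\ell^{2m}n)\equiv0\pmod{\ell^m}\qquad\text{whenever }\leg{-n}{\ell}=1.
\]
For part $(ii)$ the two arguments give $24N-1=\ell^{2m+1}n$ and $24N'-1=\ell^{2m-1}n$, whose $p$-contributions are divisible by $\ell^{2m+1}$ and $\ell^{2m-1}$, hence by $\ell^m$; so $(ii)$ is equivalent to
\[
a(\ell^{2m+1}n)\equiv\leg{3}{\ell}a(\ell^{2m-1}n)\pmod{\ell^m}\qquad(\text{all }n).
\]

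Next I would invoke the $\ell$-adic Hecke congruence for $\mathcal M$ (Theorem~\ref{thm1}, Proposition~\ref{thm2}): $\mathcal M\mid T(\ell^{2m})$ is congruent modulo $\ell^m$ to an explicit multiple of $\mathcal M$; equivalently, $\mathcal M$ satisfies, to precision $\ell^m$, the Hecke relations of an eigenform for $T(\ell^2)$ with eigenvalue $\leg{3}{\ell}(1+\ell)$ — the value forced by the shadow being the weight $1/2$ Hecke eigenform $\eta(24z)$ (Serre--Stark). The decisive point is the factorisation
\[
1-\leg{3}{\ell}(1+\ell)X+\ell X^2=\Bigl(1-\leg{3}{\ell}X\Bigr)\Bigl(1-\leg{3}{\ell}\ell X\Bigr).
\]
Unwinding the weight $3/2$ operator, for $n_1$ coprime to $\ell$ one has $\bigl(\mathcal M\mid T(\ell^2)\bigr)(n_1)=a(\ell^2n_1)+\leg{-3n_1}{\ell}a(n_1)$, while at exponents $\ell^{2j}n_1$ with $j\ge1$ the Legendre term vanishes, giving $a(\ell^{2j+2}n_1)\equiv\leg{3}{\ell}(1+\ell)a(\ell^{2j}n_1)-\ell\,a(\ell^{2j-2}n_1)\pmod{\ell^m}$. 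Summing this linear recursion yields, modulo $\ell^m$,
\[
\sum_{j\ge0}a(\ell^{2j}n_1)X^j\equiv a(n_1)\cdot\frac{1-\leg{-3n_1}{\ell}X}{\bigl(1-\leg{3}{\ell}X\bigr)\bigl(1-\leg{3}{\ell}\ell X\bigr)}.
\]
When $\leg{-n_1}{\ell}=1$ the numerator cancels one factor of the denominator, so $a(\ell^{2m}n_1)\equiv\leg{3}{\ell}^{m}\ell^{m}a(n_1)\equiv0\pmod{\ell^m}$; since $\leg{-n}{\ell}=1$ forces $\ell\nmid n$ we take $n_1=n$, proving $(i)$. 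Part $(ii)$ follows the same way from the odd-index recursion (generating function $a(\ell n_1)\big/\bigl((1-\leg{3}{\ell}X)(1-\leg{3}{\ell}\ell X)\bigr)$), after writing $n=\ell^c n_1$ with $\ell\nmid n_1$: in each parity case $a(\ell^{2m+1}n)-\leg{3}{\ell}a(\ell^{2m-1}n)$ telescopes to $\ell^m$ times an $\ell$-integer.

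I expect the genuine difficulty to lie entirely in the input from Theorem~\ref{thm1} — establishing that $\mathcal M\mid T(\ell^{2m})$ is congruent to a multiple of $\mathcal M$ modulo $\ell^m$, and not merely modulo $\ell$. The lever there is that the shadow is a \emph{single} weight $1/2$ theta series, so the Hecke action on the non-holomorphic part of $\widehat{\mathcal M}$ is completely explicit; the holomorphic discrepancy must then be shown $\ell$-adically negligible at the level of $T(\ell^{2m})$ by an induction on $m$ that carefully tracks powers of $\ell$ — which is precisely why one works with $T(\ell^{2m})$ rather than iterating $T(\ell^2)$. Granted that, everything above is bookkeeping, the only care points being the divisibility of the $(24N-1)p(N)$ terms and the parity analysis on the prime-to-$\ell$ part of $n$.
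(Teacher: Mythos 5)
Your overall architecture is the same as the paper's: pass from $\spt(n)$ to the integral mock modular form $\mathcal M=-12M$, observe that the $(24N-1)p(N)$ correction is divisible by the relevant power of $\ell$ (this reduction is correct), and then extract coefficient congruences from the Hecke congruence of Theorem~\ref{thm1}. The gap is in the extraction step. You assert the recursion
$a(\ell^{2j+2}n_1)\equiv\leg{3}{\ell}(1+\ell)\,a(\ell^{2j}n_1)-\ell\,a(\ell^{2j-2}n_1)\pmod{\ell^m}$
for all $j\ge1$, i.e.\ that $\mathcal M$ is an eigenform for $T(\ell^2)$ with eigenvalue $\leg{3}{\ell}(1+\ell)$ \emph{to precision $\ell^m$}. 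Theorem~\ref{thm1} does not give this: its $m=1$ case controls the defect $\mathcal M\mid T(\ell^2)-\leg{3}{\ell}(1+\ell)\mathcal M$ only modulo $\ell$, and that defect is a nonzero weakly holomorphic form (its principal part is $-12\bigl(\ell q^{-\ell^2}-\leg{3}{\ell}\ell q^{-1}\bigr)+\cdots$), so it cannot vanish modulo $\ell^m$ for all $m$. Likewise $\mathcal M\mid T(\ell^{2m})$ is \emph{not} congruent modulo $\ell^m$ to a multiple of $\mathcal M$; Theorem~\ref{thm1} only relates $T(\ell^{2m})$ to $\leg{3}{\ell}T(\ell^{2m-2})$ modulo $\ell^m$, and iterating down to $T(1)$ loses a power of $\ell$ at each step, leaving only a mod $\ell$ statement. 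Consequently your generating-function identity modulo $\ell^m$, and the telescoping for part $(ii)$, are unjustified.

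The repair is what the paper's Proposition~\ref{higherHeckeaction} provides: \emph{exact} identities (valid over $\Z$, coming from the operator relations \eqref{higherHeckedef} alone, with no eigenvalue input) expressing the coefficients $a_m(n)$ of $F_m=\mathcal M\mid T(\ell^{2m})-\leg{3}{\ell}\mathcal M\mid T(\ell^{2m-2})$ in terms of the original coefficients. For $\ell\nmid n$ with $\leg{-n}{\ell}=1$ all lower terms cancel and $a_m(n)=a_0(\ell^{2m}n)$ exactly; for $\ell\,\|\,n$ one gets $a_m(n)=a_0(\ell^{2m}n)-\leg{3}{\ell}a_0(\ell^{2m-2}n)$ exactly; and part (i) of that proposition handles $\ell^2\mid n$. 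One then applies Theorem~\ref{thm1} a single time, at level $m$, to conclude $a_m(n)\equiv0\pmod{\ell^m}$ and hence the claimed congruences. Your factorisation $1-\leg{3}{\ell}(1+\ell)X+\ell X^2=(1-\leg{3}{\ell}X)(1-\leg{3}{\ell}\ell X)$ is indeed the structural reason these cancellations occur, but it must be implemented at the level of the exact operator identities for $F_m$, not as an approximate eigenvalue relation for $T(\ell^2)$ modulo $\ell^m$.
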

\begin{remark}
Setting $n=24N-1$ in the first part of the theorem, we obtain
\[\spt\left(\ell^{2m} N+\delta_{\ell, 2m}\right)\equiv 0\pmod{\ell^m}\]
when $\leg{1-24N}\ell=1$.  So our result is implied by \eqref{frank2} for the primes $\ell=5$, $7$, and $13$.
It is not clear how (or if)  our results are related to the
more delicate \eqref{frank1} and  its counterparts modulo $7$ and $13$ (see (1.6)-(1.8) of \cite{Ga1}).
\end{remark}

\begin{remark}
Theorem  \ref{thm1bis} can be used to produce   explicit families of  congruences.  For example, we may take $n\equiv \delta\pmod \ell$ in part $(i)$ for any
$\delta$ with $\leg{-\delta}\ell=1$ to
obtain examples of the form
\[
\begin{aligned}
\spt\left(11^{2m+1}n + \frac{11^{2m}\cdot 167 + 1}{24}\right) \equiv 0 \pmod{11^{m}}, \\
\spt\left(17^{2m+1}n + \frac{17^{2m}\cdot 239 + 1}{24}\right) \equiv 0 \pmod{17^{m}}, \\
\spt\left(19^{2m+1}n + \frac{19^{2m}\cdot 287 + 1}{24}\right) \equiv 0 \pmod{19^{m}}.\\
\end{aligned}
\]
\end{remark}

Before giving the results for the other smallest parts functions, we briefly describe the ideas in the proof of Theorem~\ref{thm1bis} (the relevant definitions can be found in Section 2).
Setting $q:=e^{2\pi i \tau}$, we define the functions  $S$ and $M$ by
\begin{equation} \label{Sofqdef}
S(\tau) =\sum_{n \geq 23} \widetilde{s}(n)q^n := \sum_{n \geq 1} \spt(n)q^{24n-1}
\end{equation}
and
\begin{equation} \label{Mofqdef}
M(\tau) = \sum_{n \geq -1} \widetilde{m}(n)q^n := S(\tau) + \frac{1}{12}\sum_{n \geq 0}(24n-1)p(n)q^{24n-1}.
\end{equation}
To establish \eqref{Ono}, Ono used the fact (proved in \cite{Br}) that $M$ is a weight $3/2$ mock modular form whose shadow
is an eigenform of the weight $1/2$  Hecke operators together with the fact (proved in \cite{Br-On1}) that the Hecke operators behave nicely on such forms.
Ono demonstrated that for each prime $\ell\geq 5$ we have
\begin{equation} \label{Kencongruence}
M \big | T\left(\ell^2\right) - \left(\frac{3}{\ell}\right) M \equiv 0 \pmod{\ell},
\end{equation}
 where $T(\ell^2)$ denotes the Hecke operator of weight $3/2$ and character
$\leg{3}{\bullet}$.
In an analogous way, Theorem~\ref{thm1bis} will follow from the next result, which shows that
  \eqref{Kencongruence} is the simplest case of an $\ell$-adic property of $M$ corresponding to the  Hecke operators $T(\ell^{2m})$.
\begin{theorem} \label{thm1}
For any prime $\ell \geq 5$ and any integer $m \geq 1$ we have
\begin{equation*}
M \big | T(\ell^{2m}) - \left(\frac{3}{\ell}\right) M \big | T(\ell^{2m-2}) \equiv  0 \pmod{\ell^m}.
\end{equation*}
\end{theorem}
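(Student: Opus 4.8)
The plan is to argue by induction on $m$, with Ono's congruence \eqref{Kencongruence} as the base case $m=1$, propagated by the standard recursion relating the half-integral weight Hecke operators $T(\ell^{2j})$. On weight $3/2$ forms, and on mock modular forms in the sense of \cite{Br-On1}, whose Shimura lift has nebentypus trivial at $\ell$ --- which holds here, since the character is $\leg{3}{\bullet}$ and its square is trivial --- one has
\[
T(\ell^{2m+2}) = T(\ell^2)\,T(\ell^{2m}) - \ell\, T(\ell^{2m-2}),
\]
so that $T(\ell^{2m}) = Q_m\bigl(T(\ell^2)\bigr)$ where $Q_0 = 1$, $Q_1(X) = X$, and $Q_{m+1}(X) = X Q_m(X) - \ell\, Q_{m-1}(X)$. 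Setting $E_m := M\,|\,T(\ell^{2m}) - \leg{3}{\ell}\, M\,|\,T(\ell^{2m-2})$ and using the identity $Q_m - \leg{3}{\ell}Q_{m-1} = \bigl(X - \leg{3}{\ell}\bigr)Q_{m-1} - \ell\, Q_{m-2}$, we obtain the recursion
\[
E_m = E_1\,|\,T(\ell^{2m-2}) - \ell\, M\,|\,T(\ell^{2m-4}) \qquad (m \geq 2).
\]

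Ono's congruence says exactly that $E_1 = \ell F_1$ for a weakly holomorphic modular form $F_1$ of weight $3/2$ on the relevant group with $\ell$-integral coefficients (its non-holomorphic part cancels because the shadow of $M$ is a Hecke eigenform, cf.\ \cite{Br}). Hence $E_m = \ell\bigl(F_1\,|\,T(\ell^{2m-2}) - M\,|\,T(\ell^{2m-4})\bigr)$, and Theorem~\ref{thm1} is equivalent to
\[
F_1\,|\,T(\ell^{2k}) \equiv M\,|\,T(\ell^{2k-2}) \pmod{\ell^k} \qquad (k \geq 1);
\]
recursively, it asks that the forms $F_0 := M$, $F_{-1} := \leg{3}{\ell}M$, $F_j := \bigl(F_{j-1}\,|\,T(\ell^2) - F_{j-2}\bigr)/\ell$ all have $\ell$-integral coefficients, equivalently that $F_{j-1}\,|\,T(\ell^2) \equiv F_{j-2}\pmod{\ell}$ for every $j\geq 1$.

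The step I expect to be the main obstacle is establishing this last family of congruences. Feeding $E_1 = \ell F_1$ blindly back into the recursion only reproduces $E_m\equiv 0\pmod\ell$ for all $m$, losing every power of $\ell$ past the first; the theorem is not a formal consequence of \eqref{Kencongruence} (the operator identities alone put $E_m$ only into $\ell\cdot(\text{bounded})$), and since the pole orders of the $F_j$ at $\infty$ grow like $\ell^{2j}$ one cannot stay inside a fixed finite-dimensional space. What must be shown is a refinement of Ono's congruence to all powers of $\ell$: that $M$ is, modulo $\ell^m$ for every $m$, an eigenform for $T(\ell^2)$ with the Eisenstein eigenvalue $\leg{3}{\ell}(1+\ell)$. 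The plan is to extract this from the $\ell$-adic structure of the Hecke action on the space of weight $3/2$ mock modular forms on $\Gamma_0(576)$ with shadow a multiple of $\eta(24\tau) = \sum_{n\geq 1}\leg{12}{n}q^{n^2}$ and holomorphic part supported on exponents $\equiv -1\pmod{24}$: reducing modulo $\ell^m$ and restricting the pole order, the operator $U(\ell^2)$ should split into an Eisenstein part (a scalar, equal modulo $\ell$ to the relevant eigenvalue) and a part nilpotent modulo $\ell$, so that $U(\ell^2)^{N}$ stabilizes for $N\gg 0$ onto the Eisenstein line; \eqref{Kencongruence} places $M$ in the Eisenstein part modulo $\ell$, and an $\ell$-adic iteration (a Hida-type stabilization of $U(\ell^2)$, or a direct estimate of the coefficients) promotes this to all powers of $\ell$. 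The argument is then completed by the exact evaluation
\[
Q_m\!\left(\leg{3}{\ell}(1+\ell)\right) - \leg{3}{\ell}\, Q_{m-1}\!\left(\leg{3}{\ell}(1+\ell)\right) = \leg{3}{\ell}^m\,\ell^m,
\]
which shows that the Eisenstein contribution to $E_m$ is divisible by $\ell^m$, indeed with room to spare.

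Two routine matters support the above. First, $T(\ell^2)$ preserves $\ell$-integrality of $q$-expansions --- immediate from the weight $3/2$ coefficient formula for $T(\ell^2)$, in which no negative power of $\ell$ occurs because $\lambda = 1$ --- and hence so does each $T(\ell^{2m})$, being an integral polynomial in $T(\ell^2)$; this is what legitimizes the divisions by $\ell$ above. Second, one must verify that the Hecke operators act correctly on the non-holomorphic parts, i.e.\ that the shadow of $M\,|\,T(\ell^{2m})$ is the expected multiple of $\eta(24\tau)$; this follows from \cite{Br} and \cite{Br-On1}, and it is what makes the statement of Theorem~\ref{thm1} meaningful --- a congruence between the coefficients of the holomorphic parts --- even though $E_m$ is not in general holomorphic.
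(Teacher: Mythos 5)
Your reduction and the exact evaluation $Q_m\bigl(\leg{3}{\ell}(1+\ell)\bigr)-\leg{3}{\ell}Q_{m-1}\bigl(\leg{3}{\ell}(1+\ell)\bigr)=\leg{3}{\ell}^m\ell^m$ are correct and do match the skeleton of the paper's argument, but the core of the proof --- the actual source of the $\ell^m$-divisibility --- is missing, and the route you propose to supply it does not work as stated. The claim that $M$ is ``modulo $\ell^m$ for every $m$'' an eigenform for $T(\ell^2)$ with eigenvalue $\leg{3}{\ell}(1+\ell)$ would, since $M^*=-12M$ has integer coefficients, force the exact identity $M^*|T(\ell^2)=\leg{3}{\ell}(1+\ell)M^*$; this is false, because the left side has a pole of order $\ell^2$ at $\infty$ and the right side does not. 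Likewise, $E_1=M|T(\ell^2)-\leg{3}{\ell}M$ is not $\ell$ times a weakly holomorphic form: the non-holomorphic parts do not cancel in $E_1$ (its harmonic completion has non-holomorphic part equal to a nonzero multiple of $\ell\cdot NH$); what is weakly holomorphic is $E_1-\leg{3}{\ell}\ell M$. The remaining suggestion (a Hida-type splitting of $U(\ell^2)$ into an Eisenstein scalar plus a nilpotent-mod-$\ell$ part) is a heuristic that is not carried out, and, as you yourself note, the growing pole orders prevent you from working in a fixed finite-dimensional space; nothing in the proposal resolves that.

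The paper's mechanism is different and entirely concrete. One forms $G_{\ell,m}:=M^*|T(\ell^{2m})-\leg{3}{\ell}M^*|T(\ell^{2m-2})-\leg{3}{\ell}^m\ell^m M^*$; precisely because of your Eisenstein evaluation, the non-holomorphic parts cancel and $G_{\ell,m}\in M^!_{\frac32}\bigl(\Gamma_0(576),\leg{12}{\bullet}\bigr)$ is genuinely weakly holomorphic. A direct computation of the action of $T(\ell^{2m})$ on the principal part $q^{-1}$ of $M^*$ shows that the entire principal part of $G_{\ell,m}$ is $\ell^m q^{-\ell^{2m}}-\leg{3}{\ell}^m\ell^m q^{-1}$, i.e.\ divisible by $\ell^m$. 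Multiplying by $\eta^{\ell^{2m}}(24\tau)$ kills the pole, and after rescaling $\tau\mapsto\tau/24$ and checking the cusp $0$ via the Fricke involution one obtains a \emph{holomorphic} form of weight $(\ell^{2m}+3)/2$ on $\Gamma_0(1)$ that vanishes modulo $\ell^m$ to order at least $(\ell^{2m}+23)/24$, exceeding the Sturm bound $(\ell^{2m}+3)/24$; hence it is identically $0$ modulo $\ell^m$, and the theorem follows since $\leg{3}{\ell}^m\ell^m M^*\equiv 0\pmod{\ell^m}$. The key idea you are missing is therefore the $\ell^m$-divisibility of the \emph{principal part} of the weakly holomorphic form $G_{\ell,m}$ combined with Sturm's criterion; no mod-$\ell^m$ eigenform property of $M$ is needed, nor is one true.
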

This theorem gives congruences for the coefficients of the mock modular form $M$, and
$\spt(n)$ inherits the congruences in Theorem  \ref{thm1bis} because of the simple form taken by $M - S$.

We turn to the  other smallest parts functions.
Recall that an \textit{overpartition} is a partition in which the first occurrence of each distinct number may be overlined.
The function  $\overline{\sptone}(n)$ (see \cite{Br-Lo-Os1}) denotes the number of smallest parts in the overpartitions of $n$ having odd smallest part.
 For example, the $14$ overpartitions of $4$ are
\begin{equation*} \label{pbarof4}
\begin{gathered}
4, \, \overline{4}, \, 3+1, \, \overline{3} + 1, \, 3 + \overline{1}, \,
\overline{3} + \overline{1}, \, 2+2, \, \overline{2}
+ 2, \, 2+1+1, \, \overline{2} + 1 + 1, \, 2+ \overline{1} + 1, \, \\
\overline{2} + \overline{1} + 1, \, 1+1+1+1, \, \overline{1} + 1 + 1 +1,
\end{gathered}
\end{equation*}
and so $\overline{\sptone}(4) = 20$.  By  \cite[Section 7]{BLO} we have
\begin{equation} \label{sptonegf}\begin{aligned}
\sum_{n \geq 1}& \overline{\sptone}(n)q^n \\
&= \left(\prod_{n \geq 1}\frac{1+q^n}{1-q^n}\right) \left(\sum_{n \geq 1}\frac{2nq^n}{1-q^{2n}} + \sum_{n \in \mathbb{Z} \backslash \{0\}} \frac{4(-1)^nq^{n^2+n}(1+q^{2n}+q^{3n})}{(1-q^{2n})(1-q^{4n})} \right).
\end{aligned}\end{equation}
Define the functions $\overline{S}$ and $\overline{M}$ by
\begin{equation*}
\overline{S} (\tau):= \sum_{n \geq 1} \overline{\sptone}(n)q^n
\end{equation*}
and
\begin{equation} \label{barM}
\overline{M}(\tau) = \sum_{n \geq 0} \overline{m}(n)q^n := \overline{S}(\tau) + \frac{1}{12}\frac{\eta(2\tau)}{\eta^2(\tau)}
\left(E_2(\tau)-4E_2(2 \tau) \right)
,
\end{equation}
where $\eta(\tau):=q^{1/24}\prod_{n \geq 1} (1-q^n)$ is Dedekind's $\eta$-function,
and $E_2$ is the weight $2$  quasimodular Eisenstein series defined by
$$
E_2(\tau) := 1-24\sum_{n \geq 1} \frac{nq^n}{1-q^n}.
$$

A priori this situation seems rather different than that encountered above.
In contrast  to $M$, the
function $\overline{M}$ is a generating function for class numbers and is thus  a Hecke eigenform (see Proposition \ref{lemma1}).
 However, there seems to be no immediate reason that $\overline{\sptone}(n)$ should
inherit congruence properties from $\overline{m}(n)$.  It turns out that  a certain weakly holomorphic modular form $\overline{g}$  related to  $\overline{M} - \overline{S}$ (see \eqref{defineg} for the definition) has the desired $\ell$-adic properties, which are described in  Proposition \ref{thm2}.
As a consequence we obtain the following  congruence properties for $\overline{\sptone}(n)$.

\begin{theorem} \label{thm2bis}
For any prime $\ell \geq 3$ and any integer $m \geq 1$ the following are true:
\begin{itemize}
\item[$(i)$] If $\left(\frac{-n}{\ell}\right) = 1$, then we have
\begin{equation}
\overline{\sptone}(\ell^{2m}n) \equiv 0 \pmod{\ell^m}.
\end{equation}

\item[$(ii)$] For any $n$ we have
\begin{equation}
\overline{\sptone}(\ell^{2m+1}n) \equiv \overline{\sptone}(\ell^{2m-1}n) \pmod{\ell^m}.
\end{equation}
\end{itemize}
\end{theorem}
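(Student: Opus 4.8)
I would reduce Theorem~\ref{thm2bis} to the single $\ell$-adic Hecke congruence
\begin{equation*}
\overline S \,\big|\, T(\ell^{2m}) \;-\; \overline S \,\big|\, T(\ell^{2m-2}) \;\equiv\; 0 \pmod{\ell^m},
\end{equation*}
where $T(\ell^{2j})$ acts on $q$-expansions in the usual way, and then read off the two stated congruences by comparing Fourier coefficients. Throughout one works in $\Z_{(\ell)}[[q]]$; this is legitimate because $\overline m(n)-\overline{\sptone}(n)\in\tfrac14\Z$, so $\overline M$, $\overline S$, and $\overline g$ all have $\ell$-integral coefficients for $\ell\geq 3$.

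To establish the displayed congruence I would combine Propositions~\ref{lemma1} and~\ref{thm2}. By Proposition~\ref{lemma1}, $\overline M$ is a Hecke eigenform of weight $3/2$, and its eigenvalue under $T(\ell^2)$ is $\ell+1$, namely the $T(\ell)$-eigenvalue of the weight-$2$ Eisenstein series that is its Shimura lift. The weight-$3/2$ Hecke operators obey $T(\ell^{2m+2})=T(\ell^2)\,T(\ell^{2m})-\ell\,T(\ell^{2m-2})$, so on $\overline M$ they act as multiplication by $\lambda_\ell(m):=1+\ell+\cdots+\ell^m$; since $\lambda_\ell(m)-\lambda_\ell(m-1)=\ell^m$ we obtain the \emph{exact} identity
\begin{equation*}
\overline M \,\big|\, T(\ell^{2m}) \;-\; \overline M \,\big|\, T(\ell^{2m-2}) \;=\; \ell^m\,\overline M \;\equiv\; 0 \pmod{\ell^m}.
\end{equation*}
Proposition~\ref{thm2} is precisely the analogous congruence $\overline g\,|\,T(\ell^{2m})-\overline g\,|\,T(\ell^{2m-2})\equiv 0\pmod{\ell^m}$ for the weakly holomorphic form $\overline g$. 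Since $\overline S=\overline M-c\,\overline g$ for a rational constant $c$ with $2$-power denominator (so $c\in\Z_{(\ell)}$ for $\ell\geq 3$), by~\eqref{barM} and~\eqref{defineg}, the linearity of $T(\ell^{2m})$ yields the displayed congruence for $\overline S$.

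It then remains to pass to coefficients, and here the situation is exactly as in Ono's treatment of $\spt$. For a weight-$3/2$ form $f=\sum a(n)q^n$ with trivial nebentypus, the $q^n$-coefficient of $f\,\big|\,T(\ell^2)$ is $a(\ell^2 n)+\left(\frac{-n}{\ell}\right)a(n)+\ell\,a(n/\ell^2)$, and the coefficients of $f\,\big|\,T(\ell^{2m})$ are obtained by iterating, using the recursion above. If $\left(\frac{-n}{\ell}\right)=1$ then $\ell\nmid n$, the terms $a(n/\ell^2)$ drop out, and one checks that the $q^n$-coefficient of $f\,\big|\,T(\ell^{2m})$ equals $\sum_{j=0}^{m}a(\ell^{2j}n)$; subtracting the corresponding quantity for $T(\ell^{2m-2})$ leaves exactly $\overline{\sptone}(\ell^{2m}n)$, which is therefore $\equiv 0\pmod{\ell^m}$ --- this is part~$(i)$. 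For part~$(ii)$ I would compare the $q^{\ell n}$-coefficients instead; since $\left(\frac{-\ell n}{\ell}\right)=0$, the iterated formula at the index $\ell n$ collapses, and for $\ell\nmid n$ the $q^{\ell n}$-coefficient of $f\,\big|\,T(\ell^{2m})$ is simply $a(\ell^{2m+1}n)$, so the master congruence gives $\overline{\sptone}(\ell^{2m+1}n)\equiv\overline{\sptone}(\ell^{2m-1}n)\pmod{\ell^m}$; the remaining case $\ell\mid n$ follows by factoring the power of $\ell$ out of $n$ and combining the master congruences at levels $\leq m$ with part~$(i)$. Since $\overline{\sptone}$ is integer-valued, these are honest integer congruences.

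The Fourier-coefficient manipulations are routine; the genuine content lies in Propositions~\ref{lemma1} and~\ref{thm2}. The first is the identification of $\overline M$ with a class-number generating function, which is a Hecke eigenform by the classical theory. The second --- the $\ell$-adic Hecke relation for $\overline g$, which is the $\overline{\sptone}$-analogue of Theorem~\ref{thm1} --- is the main obstacle: unlike the $\spt$ case, where $M-S$ carries an obvious factor of $\ell$ at the relevant coefficients, here one must prove a genuine statement about the Hecke action on $\overline g$. I would do this by exploiting the explicit shape of $\overline g$ (built from $\eta(2\tau)/\eta^2(\tau)$ and $E_2(\tau)-4E_2(2\tau)$): the difference $\overline g\,|\,T(\ell^{2m})-\overline g\,|\,T(\ell^{2m-2})$ is again weakly holomorphic of the same weight and level, and one shows it lies in $\ell^m\Z_{(\ell)}[[q]]$ by controlling $\overline g$ modulo $\ell^m$ --- for instance by exhibiting it, modulo each power of $\ell$, inside a space on which $T(\ell^{2m})$ and $T(\ell^{2m-2})$ agree to the needed precision, as they do on the Eisenstein eigenform with eigenvalue $\ell+1$ that governs $\overline M$. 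Making this control uniform in $m$ is the crux.
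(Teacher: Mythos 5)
Your overall strategy coincides with the paper's: establish a master congruence $F\,|\,T(\ell^{2m})-F\,|\,T(\ell^{2m-2})\equiv 0\pmod{\ell^m}$ for the relevant weight-$3/2$ objects and then extract the two coefficient congruences. Your Fourier-coefficient manipulations agree with Proposition \ref{higherHeckeaction}, and the exact identity $\overline M\,|\,T(\ell^{2m})-\overline M\,|\,T(\ell^{2m-2})=\ell^m\overline M$ is correct. The genuine gap is in the transfer from $\overline m(n)$ to $\overline{\sptone}(n)$: the identity $\overline S=\overline M-c\,\overline g$ is false. By \eqref{barM}, $\overline M-\overline S=\tfrac{1}{12}\,\overline P(\tau)\bigl(E_2(\tau)-4E_2(2\tau)\bigr)$ with $\overline P=\eta(2\tau)/\eta^2(\tau)$, and since $E_2(\tau)-4E_2(2\tau)=-2\bigl(2E_2(2\tau)-E_2(\tau)\bigr)-E_2(\tau)$ contains the quasimodular $E_2(\tau)$ with nonzero coefficient, $\overline M-\overline S$ is \emph{not} a constant multiple of any weakly holomorphic modular form. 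Concretely, $\overline M-\overline S=-\tfrac14-\tfrac52 q+\cdots$ while $-\tfrac14\overline h=-\tfrac14-\tfrac{13}{2}q+\cdots$; and the paper's $\overline g$ is not even this object --- it is the Fricke transform of $\overline h$, with principal part $q^{-1}$ and support on exponents $\equiv 7\pmod 8$, so it cannot equal a multiple of $\overline M-\overline S$. The correct decomposition is $\overline M-\overline S=2q\tfrac{d}{dq}\overline P-\tfrac14\overline h$ with $\overline h=\bigl(2E_2(2\tau)-E_2(\tau)\bigr)\overline P\in M^!_{3/2}(\Gamma_0(16))$: Proposition \ref{thm2} applies to $\overline h$, and the derivative piece must be handled separately --- easily, since its $N$-th coefficient $2N\overline p(N)$ is divisible by $\ell^{2m}$ (resp.\ $\ell^{2m-1}$) at the indices $N=\ell^{2m}n$ (resp.\ $N=\ell^{2m\pm1}n$) occurring in the theorem. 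Your argument never accounts for this quasimodular piece.

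Two smaller points. Your closing sketch of Proposition \ref{thm2} ("exhibiting $\overline g$ modulo each power of $\ell$ inside a space on which the Hecke operators agree to the needed precision") is not yet an argument; the paper's mechanism is to multiply $\overline g\,|\,T(\ell^{2m})-\overline g\,|\,T(\ell^{2m-2})=\ell^m q^{-\ell^{2m}}+O(q^7)$ by $\eta^{2\ell^{2m}}(16\tau)/\eta^{\ell^{2m}}(8\tau)$ to cancel the pole, verify holomorphy at the cusps via the Fricke involution, and apply Sturm's criterion to the resulting holomorphic form on $\Gamma_0(2)$, whose order of vanishing modulo $\ell^m$ exceeds the Sturm bound. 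Also, your treatment of part (ii) when $\ell\mid n$ ("factoring the power of $\ell$ out of $n$") can be made to work but is handled more cleanly and uniformly by part (i) of Proposition \ref{higherHeckeaction}.
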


\begin{remark}
The restriction to overpartitions whose smallest part is odd is essential.  While it was shown in  \cite{Br-Lo-Os1} that  both $\overline{\sptone}(n)$ and the unrestricted smallest parts function for overpartitions satisfy simple congruences modulo $3$ and $5$,  only $\overline{\sptone}(n)$ satisfies the general congruences in Theorem \ref{thm2bis}.
\end{remark}

Finally, we  consider the restriction of $\spt(n)$ to those partitions without repeated odd parts and whose smallest part is even \cite{BLO}.  Denote this function by $\mtwospt(n)$.  For example, there are $7$ partitions of $7$ without repeated odd parts,
$$
7, \,6+1, \,5+2, \,4+3, \,4+2+1, \,3+2+2, \,2+2+2+1,
$$
giving $\mtwospt(7) = 3$.  By \cite[Section 7]{BLO} we have
\begin{equation} \label{mtwosptgf}
\sum_{n \geq 1} \mtwospt(n)q^n = \left(\prod_{n \geq 1}\frac{1+q^{2n-1}}{1-q^{2n}}\right)\left(\sum_{n \geq 1} \frac{nq^{2n}}{1-q^{2n}} + \sum_{n \in \mathbb{Z} \backslash \{0\}} \frac{(-1)^nq^{2n^2+n}}{(1-q^{2n})^2}\right).
\end{equation}

Define $S2$ and $M2$ by
\begin{equation} \label{S2def}
S2(\tau) = \sum_{n \geq 7} \widetilde{s2}(n)q^n := \sum_{n \geq 0} (-1)^n\mtwospt(n)q^{8n-1}
\end{equation}
and
\begin{equation} \label{M2def}
M2(\tau) = \sum_{n \geq -1} \widetilde{m2}(n)q^{n} := S2(q) + \frac{1}{24}\frac{\eta(8\tau)}{\eta^2(16\tau)}\left(E_2(16\tau) - E_2(8\tau)\right).
\end{equation}
The situation here is very similar to  that of
 $\overline{M}$ and $\overline{S}$ above.
\begin{theorem} \label{thm3}
 For any prime $\ell \geq 3$ and any integer $m \geq 1$ the following are true:
\begin{itemize}
\item[$(i)$] If $\left(\frac{-n}{\ell}\right) = 1$, then we have
\begin{equation}
\mtwospt\left(\frac{\ell^{2m}n+1}{8}\right) \equiv 0 \pmod{\ell^m}.
\end{equation}

\item[$(ii)$] For any $n$ we have
\begin{equation}
\mtwospt\left(\frac{\ell^{2m+1}n+1}{8}\right) \equiv  \left(\frac{2}{\ell}\right)\mtwospt\left(\frac{\ell^{2m-1}n+1}{8}\right) \pmod{\ell^m}.
\end{equation}
\end{itemize}
\end{theorem}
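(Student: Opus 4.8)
The plan is to follow, almost verbatim, the argument already used for $\overline{\sptone}(n)$: one replaces the triple $(\overline{S},\overline{M},\overline{g})$ by a triple $(S2,M2,g2)$, where $g2$ is a weight $3/2$ weakly holomorphic modular form built from $M2-S2$ in the same way that $\overline{g}$ is built from $\overline{M}-\overline{S}$ in \eqref{defineg}. The whole proof then rests on the splitting $S2=M2-(M2-S2)$ together with an ``Eisenstein'' input for $M2$ and a ``weakly holomorphic'' input for $M2-S2$.

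For the Eisenstein input I would establish the analog of Proposition \ref{lemma1}: that $M2$ equals, up to an explicit rational constant, a class-number Eisenstein series on the relevant congruence subgroup, and is therefore a Hecke eigenform of weight $3/2$ for all of the operators $T(\ell^{2m})$ with $\ell$ odd. The computation uses $q\frac{d}{dq}\log\eta(a\tau)=\frac{a}{24}E_2(a\tau)$ to rewrite the correction term $\frac{1}{24}\frac{\eta(8\tau)}{\eta^2(16\tau)}(E_2(16\tau)-E_2(8\tau))$ and then matches $q$-expansions against Hurwitz class numbers, exactly as in the identification of Zagier's Eisenstein series; the resulting form is arranged to be supported on exponents $\equiv-1\pmod 8$ and to have trivial nebentypus (this is the role of the sign $(-1)^n$ in \eqref{S2def}). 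The standard eigenvalue of the class-number Eisenstein series, obtained from the weight $2$ Hecke algebra via the Shimura lift, gives the $T(\ell^{2m})$-eigenvalue $1+\ell+\cdots+\ell^{m}$ of $M2$, so that the eigenvalue of $T(\ell^{2m})-T(\ell^{2m-2})$ on $M2$ is $\ell^{m}$ and
\[ M2\,\big|\,T(\ell^{2m})-M2\,\big|\,T(\ell^{2m-2})\equiv 0\pmod{\ell^m}. \]

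For the weakly holomorphic input I would define $g2$ from $M2-S2$ in the manner of \eqref{defineg} and check that it is a weakly holomorphic modular form of weight $3/2$ of precisely the shape handled by Proposition \ref{thm2}, the general $\ell$-adic phenomenon advertised in the introduction; its proof then applies word for word to give $g2\,|\,T(\ell^{2m})-g2\,|\,T(\ell^{2m-2})\equiv 0\pmod{\ell^m}$, and keeping track of how $g2$ encodes $M2-S2$ transfers this congruence to $M2-S2$ itself. Subtracting from the congruence for $M2$ yields
\[ S2\,\big|\,T(\ell^{2m})-S2\,\big|\,T(\ell^{2m-2})\equiv 0\pmod{\ell^m}. \]
Writing out the action of the weight $3/2$ operator $T(\ell^{2m})$ on $S2=\sum\widetilde{s2}(n)q^n$ — in which the symbol $\left(\frac{-n}{\ell}\right)$ intervenes — and comparing the coefficients of $q^n$ and $q^{\ell n}$ for $\gcd(n,\ell)=1$, one deduces, precisely as Theorem \ref{thm1bis} was deduced from Theorem \ref{thm1}, that $\widetilde{s2}(\ell^{2m}n)\equiv 0\pmod{\ell^m}$ whenever $\left(\frac{-n}{\ell}\right)=1$, and that $\widetilde{s2}(\ell^{2m+1}n)\equiv\widetilde{s2}(\ell^{2m-1}n)\pmod{\ell^m}$ for all $n$. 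Since $\widetilde{s2}(8N-1)=(-1)^N\mtwospt(N)$ by \eqref{S2def}, part $(i)$ follows at once; in part $(ii)$, converting back from $\widetilde{s2}$ to $\mtwospt$ on both sides introduces the factor $(-1)^{(\ell^2-1)/8}=\left(\frac{2}{\ell}\right)$, where one uses that $\frac{\ell^{2k\pm1}n+1}{8}$ is an integer only when $n$ is odd, so the relevant sign reduces to $(-1)^{(\ell^2-1)/8}$.

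I expect the first step to be the main obstacle: pinning down the exact linear combination of class-number series equal to $M2$, together with the multiplier and nebentypus bookkeeping at the relevant level needed to make the weight $3/2$ Hecke operators act as asserted, and — closely related — verifying that the form $g2$ extracted from $M2-S2$ really lies in the class of weakly holomorphic forms to which Proposition \ref{thm2} applies. The coefficient extraction and the sign computation in the last step are routine, but the signs must be tracked with care.
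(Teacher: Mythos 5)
Your proposal is correct and follows essentially the same route as the paper: identify $M2$ with the Hurwitz class number generating function $\sum H(8n-1)q^{8n-1}$ to get the eigenvalue $1+\ell+\cdots+\ell^m$ for $T(\ell^{2m})$, handle $M2-S2$ via a derivative term plus a weight $3/2$ weakly holomorphic form satisfying the congruence of Proposition~\ref{thm2}, and recover the $\leg{2}{\ell}$ in part $(ii)$ from the sign $(-1)^{(\ell^2-1)/8}$ using that $n$ must be odd. The only simplification you miss is that the weakly holomorphic piece of $M2-S2$ is literally the form $\overline{g}$ of \eqref{defineg}, so \eqref{gclaim} applies verbatim and no new Sturm-bound computation is needed.
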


The paper is organized as follows.  In the next section we collect some necessary background on modular forms, mock modular forms, and Hecke operators.  In Section $3$ we prove Theorem \ref{thm1} and use it to obtain the congruences in Theorem~\ref{thm1bis}.  In Section $4$ we prove       Theorem~\ref{thm2bis} and in Section $5$ we prove Theorem \ref{thm3}.

\begin{ack} The authors thank Frank Garvan and Ken Ono for helpful comments on an earlier version of this paper.
\end{ack}
\section{Preliminaries}

First we recall the definitions of harmonic weak Maass forms and mock modular forms (see, for example, \cite{On2} or \cite{Za1} for details).
If $k\in \frac{1}{2}\Z\setminus \Z$ and  $\tau=x+iy$ with $x, y\in \R$,
then the  weight $k$ \textit{hyperbolic Laplacian} is given by
\begin{equation*}\label{laplacian}
\Delta_k := -y^2\left( \frac{\partial^2}{\partial x^2} +
\frac{\partial^2}{\partial y^2}\right) + iky\left(
\frac{\partial}{\partial x}+i \frac{\partial}{\partial y}\right).
\end{equation*}
If $d$ is odd, then define $\epsilon_d$ by
\begin{equation*}
\epsilon_d:=\begin{cases} 1 \ \ \ \ &{\text {\rm if}}\ d\equiv
1\pmod 4,\\
i \ \ \ \ &{\text {\rm if}}\ d\equiv 3\pmod 4. \end{cases}
\end{equation*}
If    $4\mid N$ and $\chi$ is  a Dirichlet character  modulo $N$, then
 a {\it harmonic weak Maass form of weight $k$ with Nebentypus $\chi$ on
$ \Gamma_0(N)$} is a  smooth function $F:\H\to \C$
satisfying the following:
\begin{enumerate}
\item For all $ \left(\begin{smallmatrix}a&b\\c&d
\end{smallmatrix} \right)\in \Gamma_0(N)$ and all $\tau \in \H$, we
have
\begin{displaymath}
F \left(\frac{a \tau +b}{c \tau +d} \right)
= \leg{c}{d}\epsilon_d^{-2k} \chi(d)\,(c\tau+d)^{k}\ F(\tau).
\end{displaymath}
\item  $\Delta_k(F)=0$.
\item The function $F$ has
at most linear exponential growth at the cusps of $\Gamma_0(N)$.
\end{enumerate}
We denote by $H_{k}\left(\Gamma_0(N), \chi\right)$ the space of harmonic weak Maass forms of weight $k$ with Nebentypus $\chi$ on  $\Gamma_0(N)$.
We denote the  subspace of weakly holomorphic forms (i.e., those meromorphic forms whose poles are supported at the cusps of $\Gamma_0(N)$) by  $M_{k}^!\left(\Gamma_0(N), \chi\right)$, and the space of holomorphic forms by $M_{k}\left(\Gamma_0(N), \chi\right)$   (if $\chi$ is trivial, then we drop it from the notation).
Each harmonic weak Maass form $F$ decomposes uniquely  as the sum of  a holomorphic part $F^{+}$ and a non-holomorphic part $F^{-}$.
The holomorphic part, which is  known as a \textit{mock modular form},  is a power series in $q$ with at most finitely many negative exponents.

We will  suppose throughout that $\chi$ is a quadratic character.  For $m$ coprime  to $N$, the   weight $3/2$ Hecke operators $T_{\chi}(m^2)$
act on the spaces   $M_{k}^!\left(\Gamma_0(N), \chi\right)$,   $M_{k}\left(\Gamma_0(N), \chi\right)$,
and $H_{k}\left(\Gamma_0(N), \chi\right)$ (see Section 7 of \cite{Br-On1} for a discussion of the latter).
For primes $\ell\nmid N$ the action on $q$-expansions
 is given by
\begin{equation} \label{Heckeaction}
\left. \left( \sum_n a(n) q^n\right)\right| T_{\chi}(\ell^{2})
:= \sum_n \left(a \left( \ell^2n\right) + \chi(\ell)\leg{-n}{\ell}a(n)+ \ell a \left( \frac{n}{\ell^2}  \right) \right)q^n.
\end{equation}
We will drop the subscript $\chi$ from $T_{\chi}$ when there can be no confusion.
The Hecke operators commute, and
the multiplicative relations are the same as those in weight $2$ and trivial character.  In particular,
for $m\geq 2$  we have
\begin{equation} \label{higherHeckedef}
\begin{split}
T(\ell^{2m})&=  T(\ell^{2m-2})T(\ell^2) - \ell \cdot T(\ell^{2m-4})\\
&=  T(\ell^{2})T(\ell^{2m-2}) - \ell \cdot T(\ell^{2m-4}).
\end{split}
\end{equation}

The next proposition describes the action of  $T(\ell^{2m})$  on Fourier expansions.
We begin with the  series
\begin{equation*}\label{F0define}
F_0 (\tau)=\sum_n a_0(n)q^n.
\end{equation*}
If  $m\geq 1$  and $\chi$ is a quadratic character, define the series $F_{m,\chi}$ by
\begin{equation}
F_{m,\chi}(\tau)=\sum_n a_m(n)q^n:=F_{0}\big| T_{\chi}(\ell^{2m})(\tau)- \chi(\ell) F_{0}\big| T_{\chi}(\ell^{2m-2})(\tau). \label{Fdef}
\end{equation}
From  \eqref{higherHeckedef} it follows that for each  $m\geq 2$ we have
\begin{equation}
F_{m,\chi}=F_{m-1,\chi}\big | T_{\chi}(\ell^2)-\ell \cdot F_{m-2,\chi}.\label{Frec}
\end{equation}

\begin{proposition}  \label{higherHeckeaction}
With notation as above,  for each $m\geq 1$  let $F_{m,\chi}(\tau)=\sum_n a_m(n)q^n$ be defined  as in \eqref{Fdef}.
\begin{itemize}
\item[$(i)$]
For any $n$, and  for all $m\geq1$, we have
\[
a_m\left(\ell^2 n\right)-\ell a_{m-1}(n)=a_0\left(\ell^{2m+2}n\right)-\chi(\ell) a_0\left(\ell^{2m} n\right) .
\]
\item[$(ii)$]
If $\ell\nmid n$, then  for all $m \geq 1$ we have
\[
a_m(n)=a_0\left(\ell^{2m}n\right)+
\left( 1- \left(\frac{-n}{\ell}\right) \right)\cdot \sum_{k=1}^m  (-1)^k \chi(\ell)^k a_0\left(\ell^{2m-2k}n\right).
\]
\item[$(iii)$]
If $\ell\mid\mid n$, then  for all $m\geq 1$ we have
\[
a_m(n)=a_0\left(\ell^{2m} n\right)-\chi(\ell) a_0\left(\ell^{2m-2}n\right).
\]
\end{itemize}
\end{proposition}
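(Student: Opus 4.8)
The plan is to prove all three parts together by induction on $m$, using the recursion \eqref{Frec} to pass from level $m$ down to levels $m-1$ and $m-2$, and the explicit Hecke action \eqref{Heckeaction} to settle the base case $m=1$. The organizing observation is that part $(i)$ is the load‑bearing identity: once it is available at level $m-1$ it feeds directly into all of $(i)$, $(ii)$, $(iii)$ at level $m$, so inside the inductive step we always establish $(i)$ first.

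\emph{Base case $m=1$.} Since $T_\chi(\ell^0)$ is the identity, \eqref{Fdef} gives $F_{1,\chi}=F_0\big|T_\chi(\ell^2)-\chi(\ell)F_0$, so \eqref{Heckeaction} yields
\[
a_1(n)=a_0(\ell^2n)+\chi(\ell)\leg{-n}{\ell}a_0(n)+\ell\,a_0(n/\ell^2)-\chi(\ell)a_0(n).
\]
Replacing $n$ by $\ell^2n$, so that $\leg{-\ell^2n}{\ell}=0$, gives $(i)$ for $m=1$; specializing to $\ell\nmid n$, so that $a_0(n/\ell^2)=0$, gives $(ii)$ for $m=1$; and specializing to $\ell\mid\mid n$, so that in addition $\leg{-n}{\ell}=0$, gives $(iii)$ for $m=1$.

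\emph{Inductive step.} Fix $m\geq2$ and assume $(i)$--$(iii)$ at all smaller levels (with the convention $F_{0,\chi}=F_0$, which is forced by \eqref{Frec} at $m=2$ and \eqref{higherHeckedef}). Applying \eqref{Heckeaction} to \eqref{Frec} gives
\[
a_m(n)=a_{m-1}(\ell^2n)+\chi(\ell)\leg{-n}{\ell}a_{m-1}(n)+\ell\,a_{m-1}(n/\ell^2)-\ell\,a_{m-2}(n).
\]
To get $(i)$ at level $m$, substitute $n\mapsto\ell^2n$ in this identity: the Legendre term vanishes and $a_{m-1}(\ell^2n/\ell^2)=a_{m-1}(n)$, so $a_m(\ell^2n)-\ell\,a_{m-1}(n)=a_{m-1}(\ell^4n)-\ell\,a_{m-2}(\ell^2n)$, and by $(i)$ at level $m-1$ applied with argument $\ell^2n$ the right‑hand side equals $a_0(\ell^{2m+2}n)-\chi(\ell)a_0(\ell^{2m}n)$. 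To get $(iii)$, impose $\ell\mid\mid n$ in the identity: the Legendre term and $a_{m-1}(n/\ell^2)$ both vanish, leaving $a_m(n)=a_{m-1}(\ell^2n)-\ell\,a_{m-2}(n)$, which is precisely $a_0(\ell^{2m}n)-\chi(\ell)a_0(\ell^{2m-2}n)$ by $(i)$ at level $m-1$ with argument $n$. To get $(ii)$, impose $\ell\nmid n$ and set $\varepsilon:=\leg{-n}{\ell}\in\{\pm1\}$; then $a_m(n)=\bigl(a_{m-1}(\ell^2n)-\ell\,a_{m-2}(n)\bigr)+\chi(\ell)\varepsilon\,a_{m-1}(n)$, where the bracket is evaluated by $(i)$ at level $m-1$ and $a_{m-1}(n)$ is expanded by the induction hypothesis $(ii)$ at level $m-1$; using $\varepsilon^2=1$, hence $\varepsilon(1-\varepsilon)=-(1-\varepsilon)$, and shifting the summation index by $k\mapsto k+1$ then assembles the claimed sum.

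I expect the only delicate point to be the bookkeeping in $(ii)$: after the shift $k\mapsto k+1$ one must track the sign change $(-1)^{k}=-(-1)^{k+1}$ together with the extra factor of $\chi(\ell)$, and then check that the term coming from $\varepsilon(1-\varepsilon)\chi(\ell)a_0(\ell^{2m-2}n)$ exactly furnishes the missing $k=1$ summand. Everything else is a mechanical substitution into \eqref{Heckeaction} and \eqref{Frec}, using only the vanishing of $\leg{-N}{\ell}$ whenever $\ell\mid N$ and of $a_0(N/\ell^2)$ whenever $\ell^2\nmid N$.
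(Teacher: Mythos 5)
Your proposal is correct and follows essentially the same route as the paper: prove part $(i)$ first from \eqref{Heckeaction}, \eqref{Fdef}, and \eqref{Frec} (the paper telescopes the recursion down to $a_1(\ell^{2m}n)-\ell a_0(\ell^{2m-2}n)$ and then uses the explicit formula for $a_1(\ell^{2m}n)$, which is just your one-step induction unrolled), and then feed $(i)$ at level $m-1$ into the identity $a_m(n)=a_{m-1}(\ell^2 n)+\chi(\ell)\leg{-n}{\ell}a_{m-1}(n)-\ell a_{m-2}(n)$ to get $(iii)$ directly and $(ii)$ by induction. The index bookkeeping you flag in $(ii)$ works out exactly as you describe, so there is no gap.
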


\begin{proof}
Using  \eqref{Heckeaction} and \eqref{Fdef} we find for any $m\geq 1$ that
\begin{equation}\label{mequals1-0}
a_{1}\left(\ell^{2m } n\right) =a_0\left(\ell^{2m+2}n\right)+\ell a_0\left(\ell^{2m-2}n\right)-\chi(\ell) a_0\left(\ell^{2m}n\right).
\end{equation}
In particular the first formula  is true when $m=1$.
Suppose that $m\geq 2$. Repeated application of \eqref{Frec} together with \eqref{Heckeaction} shows that
\[
a_m(\ell^2n)-\ell a_{m-1}(n)
=a_{1}(\ell^{2m } n) -\ell a_{0}(\ell^{2m-2} n),
\]
and the first formula follows by \eqref{mequals1-0}.

Suppose now that $\ell^2\nmid n$.  The second  and third formulas   can be checked directly when $m=1$ using
\eqref{Heckeaction} and  \eqref{Fdef}.
 When $m\geq 2$, we use  \eqref{Heckeaction}, \eqref{Frec},   and the first formula  to  obtain
\[\begin{aligned}
a_m(n)=&a_{m-1}\left(\ell^2 n\right)+\left(\frac{-n}{\ell}\right)\chi(\ell)a_{m-1}(n)-\ell a_{m-2}(n)\\
=&a_0\left(\ell^{2m} n\right)+\left(\frac{-n}{\ell}\right)\chi(\ell)a_{m-1}(n)-\chi(\ell)  a_0\left(\ell^{2m-2}n\right).
\end{aligned}
\]
This proves the third formula. The   second formula   follows after induction.
 \end{proof}

\section{ The $\spt$ function and the proofs of Theorems~\ref{thm1bis} and \ref{thm1}}
We begin  by proving Theorem \ref{thm1}.

\begin{proof}[Proof of Theorem \ref{thm1}]
Correcting a sign error in equation (1.6) of \cite{Br}, we deduce the following (which is also recorded as Thm. 2.1 of \cite{On1}):
\begin{equation} \label{MaassTheorem}
\widehat{M}(\tau): = M(\tau)
+ NH(\tau) \in H_{\frac32}\left(\Gamma_0(576), \left(\frac{12}{\bullet}\right)\right),
\end{equation}
where
\begin{equation*}
NH(\tau) :=
\frac{-i}{4 \pi \sqrt{2}}
\int_{- \overline{\tau}}^{i \infty} \frac{\eta(24w)}{\left( -i\left(\tau+w \right)^{\frac{3}{2}}\right)} dw .
\end{equation*}
If $\ell\geq 5$ is prime, then $\eta(24z)$ is an eigenform of the Hecke operator of index $\ell^2$ and weight $1/2$
whose  eigenvalue is $\leg{3}{\ell}(1+\ell^{-1})$.
By  Lemma $7.4$ of \cite{Br-On1} it follows that
\[
\widehat{M}  | T(\ell^2) = M | T(\ell^2) +  \leg{3}{\ell}(1+\ell)\cdot NH  \in H_{\frac32}\left(\Gamma_0(576), \left(\frac{12}{\bullet}\right)\right).
\]
Arguing inductively using \eqref{higherHeckedef} we obtain, more generally,
\begin{equation} \label{shadoweigen}
\begin{aligned}
\widehat{M}  \left| T(\ell^{2m}) \right. = M  \left| T(\ell^{2m})\right. + \leg{3}{\ell}^m &\left(1+\ell+\ell^2+\cdots+\ell^m\right)\cdot NH \\
&\qquad \in H_{\frac32}\left(\Gamma_0(576), \left(\frac{12}{\bullet}\right)\right).\\
\end{aligned}
\end{equation}
In order to work with  integral coefficients, we introduce  the function
\[
M^*(\tau):= -12 M(\tau)= \frac 1q-35q^{23}+ \cdots.
\]
Defining
\begin{equation}
G_{\ell,m}:= M^*|T(\ell^{2m}) - \leg{3}{\ell} M^*|T(\ell^{2m-2})
-\leg{3}{\ell}^m \ell^m M^*,
\end{equation}
we see using  \eqref{shadoweigen} that
\begin{equation} \label{Glmweakly}
G_{\ell,m} \in M^{!}_{\frac32}\left(\Gamma_0(576),\leg{12}{\bullet}\right).
\end{equation}
Another induction argument using \eqref{Heckeaction}  and  \eqref{higherHeckedef} shows that for each $m\geq 0$
we have
\[
M^*\left| T\left( \ell^{2m}\right)(\tau)\right.
= \frac{\ell^m}{q^{\ell^{2m}}}  + \leg{3}{\ell} \frac{\ell^{m-1}}{q^{\ell^{2m-2}}}
+ \leg{3}{\ell}^2 \frac{\ell^{m-2}}{q^{\ell^{2m-4}}}
+ \cdots +\leg{3}{\ell}^m \frac{1}{q} + O \left(q^{23} \right).
\]
It follows that for $m\geq 1$  we have
\begin{equation} \label{Mordervanish}
G_{\ell,m}(\tau) = \frac{\ell^m}{q^{\ell^{2m}}}
- \leg{3}{\ell}^m \cdot  \frac{\ell^m}{q}
+ O \left(q^{23} \right).
\end{equation}
Now define the functions
\[
F_{\ell,m}(\tau):= \eta^{\ell^{2m}} (24 \tau)
G_{\ell,m}(\tau)
\]
and
\begin{equation} \label{Hdef}
H_{\ell,m}(\tau):= F_{\ell,m}\left(\frac{\tau}{24} \right).
\end{equation}

We wish to show that
\begin{equation} \label{holmod}
H_{\ell,m}    \in M_{\frac{\ell^{2m}+3}{2}}(\Gamma_0(1)) .
\end{equation}
Using \eqref{Glmweakly} together with the fact that $\eta(24z) \in M_{1/2}(\Gamma_0(576),\leg{12}{\bullet})$, we see that  $F_{\ell,m}$ is an element of $M^{!}_{\frac{\ell^{2m}+3}{2}}(\Gamma_0(576))$.
The coefficients of $F_{\ell,m}$ are supported on exponents divisible by $24$, which implies that
\[
H_{\ell,m}\in M^!_{ \frac{\ell^{2m}+3}{2}} \left(\Gamma_0(24)\right).
\]
Recall that the Fricke involution $W_N$ on $M^!_{k}(\Gamma_0(N))$ with $k \in 1/2 + \Z$  (see, for example,  \cite{Ono}) is defined by
\begin{equation}\label{Fricke}
F\left|_{k} W_{N}(\tau) \right.
:= \left( -i \sqrt{N} \tau\right)^{-k} F \left( -\frac{1}{N \tau}\right).
\end{equation}
We argue as in the proof of Theorem~2.2 of \cite{On1}.  Using the fact that $M$ has eigenvalue $-1$ under $W_{576}$ together with
the fact that $\eta(-1/\tau)=\sqrt{-i \tau} \cdot \eta(\tau)$ and the fact that $W_{576}$ commutes with the Hecke operators $T(\ell^2)$ in question,
we compute that
\[
H_{\ell, m}(-1/24\tau)=(24\tau)^\frac{\ell^{2m}+3}2 H_{\ell, m}(24\tau).
\]
Replacing $\tau$ by $\tau/24$, we conclude that \eqref{holmod} is indeed true.

Now from \eqref{Mordervanish} we see that $H_{\ell,m}$ vanishes modulo $\ell^m$ to order at least $\frac{\ell^{2m} +23}{24}$.  By Sturm's criterion \cite{Sturm},
$H_{\ell, m}$ is identically zero modulo $\ell^m$.

\end{proof}

\begin{proof}[Proof of Theorem \ref{thm1bis}]
We use  the notation of Section~2   with $F_0(\tau)=M(\tau)=\sum_n\widetilde m(n)q^n$ and $\chi(\ell)=\leg{3}{\ell}$.
Observe that Theorem~\ref{thm1} (with $a_m$ defined as in    \eqref{Fdef}) gives
\[a_m(n)\equiv 0\pmod {\ell^m}\ \ \ \  \text{for all $n, m\geq 1$}.\]
By Proposition \ref{higherHeckeaction} (ii)
we see that if  $\leg{-n}{\ell} = 1$, then

\[
\widetilde{m}\left(\ell^{2m}n\right) \equiv 0 \pmod{\ell^m}.
 \]
  Noting that the coefficient of  $q^n$ in $M- S$ is divisible by $n$, we conclude that
\[
  \widetilde{s}\left(\ell^{2m}n\right) \equiv 0 \pmod{\ell^m},
\]
 which is the first assertion in Theorem~\ref{thm1bis}.

 Proposition ~\ref{higherHeckeaction} (iii) gives, for $m\geq 1$ and  $\ell\nmid n$,
\begin{equation}\label{tildemcong}
\widetilde{m}\left(\ell^{2m+1}n\right) - \left(\frac{3}{\ell}\right) \widetilde{m}\left(\ell^{2m-1}n\right) \equiv 0 \pmod{\ell^m}.
\end{equation}
Part (i) of that proposition shows that \eqref{tildemcong} holds when $\ell\mid n$ as well.
We may again replace $\widetilde{m}(n)$ by $\widetilde{s}(n)$, and  Theorem \ref{thm1bis} follows.

\end{proof}

\begin{remark}
There are congruence properties for $\widetilde{m}(n)$ which are not inherited by $\widetilde{s}(n)$.  For instance, applying Proposition \ref{higherHeckeaction} (ii) with $\leg{-n}{\ell} = -1$ to Theorem \ref{thm1}  gives
\[
\widetilde{m}\left(\ell^{2m}n\right) - 2\leg{3}{\ell}\widetilde{m}\left(\ell^{2m-2}n\right) + 2 \widetilde{m}\left(\ell^{2m-4}n\right) \cdots +2(-1)^m{\leg{3}{\ell}}^m \widetilde{m}(n) \equiv 0 \pmod{\ell^m},
\]
which is not guaranteed to hold with $\widetilde{s}(n)$ in place of $\widetilde{m}(n)$.
\end{remark}

\section{Proof of Theorem \ref{thm2bis}}
We now turn to the congruences for $\overline{\sptone}(n)$.  First we need the following proposition.
\begin{proposition} \label{lemma1}
Let $\overline{M}$  be defined as in \eqref{barM}.  Then for each odd prime $\ell$, $\overline{M}$  is an eigenform for the Hecke operator $T(\ell^2)$  of weight $3/2$ and trivial character, with eigenvalue $\ell+1$. Moreover, for all $m \geq 1$ we have
\begin{equation} \label{overlineMladic}
\overline{M} \big | T\left(\ell^{2m} \right)- \overline{M} \big | T\left(\ell^{2m-2}\right) \equiv 0 \pmod{\ell^m}.
\end{equation}
\end{proposition}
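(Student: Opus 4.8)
The eigenform assertion is the substantive part; given it, the congruence \eqref{overlineMladic} is a short formal deduction. The plan is to identify the completion $\widehat{\overline M}$ of $\overline M$ with a classical harmonic weak Maass form of weight $3/2$ built from Hurwitz class numbers. Concretely, I would first use \eqref{sptonegf}, \eqref{barM}, and the analysis of \cite{BLO} to exhibit a non-holomorphic Eichler integral $\overline{NH}$ of a weight $1/2$ unary theta series for which $\widehat{\overline M} := \overline M + \overline{NH}$ lies in $H_{\frac32}(\Gamma_0(N))$ for a suitable $N$ which is a power of $2$ — in particular every odd prime is coprime to $N$, so the operators of \eqref{Heckeaction} act — and whose Nebentypus is trivial. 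I would then write down the classical class number Eisenstein form $\mathcal E \in H_{\frac32}(\Gamma_0(N))$ of trivial character with this same shadow (a $\mathbb{Q}$-linear combination of $\mathcal H(\delta\tau)$, where $\mathcal H$ is Zagier's mock modular generating function for $H(n)$, together with holomorphic weight $3/2$ unary theta pieces), which by Cohen's work — and, for the $\mathcal H$-part, the Hurwitz--Kronecker class number relation, which is exactly the statement $\mathcal H\,|\,T(\ell^2)=(\ell+1)\mathcal H$ — is a $T(\ell^2)$-eigenform with eigenvalue $\ell+1$ for every odd prime $\ell$. The difference $\widehat{\overline M} - \mathcal E$ is then a \emph{holomorphic} modular form of weight $3/2$ on $\Gamma_0(N)$, and comparing the finitely many Fourier coefficients dictated by Sturm's bound \cite{Sturm} shows it vanishes. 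Hence $\overline M$ equals the holomorphic part of $\mathcal E$, and the eigenform statement with eigenvalue $\ell+1$ follows.

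I expect this identification step to be the main obstacle: pinning down the precise modular data of $\overline M$ — its level, the triviality of the character, and above all the exact shape of its shadow, so that the correct $\mathcal E$ can be written down — is where essentially all the work lies, whereas the Sturm comparison and everything after it are routine. An alternative to producing $\mathcal E$ explicitly, in the spirit of the proof of Theorem \ref{thm1}, is to note that since the shadow is a weight $1/2$ Hecke eigenform, Lemma~7.4 of \cite{Br-On1} forces the non-holomorphic parts of $\widehat{\overline M}\,|\,T(\ell^2)$ and $(\ell+1)\widehat{\overline M}$ to coincide, so that $\widehat{\overline M}\,|\,T(\ell^2) - (\ell+1)\widehat{\overline M}$ is holomorphic of weight $3/2$ on $\Gamma_0(N)$; after clearing the $\eta$-denominator in \eqref{barM} as in the passage from $G_{\ell,m}$ to $H_{\ell,m}$ in Section $3$ one could in principle finish by Sturm, but the bound would then have to be controlled uniformly in $\ell$, which is why the cleaner route is to reduce to the single equality $\widehat{\overline M}=\mathcal E$.

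Granting that $\overline M\,|\,T(\ell^2) = (\ell+1)\overline M$, the congruence \eqref{overlineMladic} is purely formal. Writing $\overline M\,|\,T(\ell^{2m}) = \lambda_m\,\overline M$, the relations \eqref{higherHeckedef} give the recursion $\lambda_m = (\ell+1)\lambda_{m-1} - \ell\,\lambda_{m-2}$ with $\lambda_0 = 1$ and $\lambda_1 = \ell+1$, whence $\lambda_m = 1 + \ell + \cdots + \ell^m$ and $\lambda_m - \lambda_{m-1} = \ell^m$. Therefore $\overline M\,|\,T(\ell^{2m}) - \overline M\,|\,T(\ell^{2m-2}) = \ell^m\,\overline M$. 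By inspection of \eqref{barM} every coefficient $\overline m(n)$ lies in $\tfrac14\Z$, hence is $\ell$-integral for each odd prime $\ell$, so the right-hand side is $\equiv 0 \pmod{\ell^m}$, as claimed.
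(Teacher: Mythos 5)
Your overall strategy is essentially parallel to the paper's (indeed it is close to what the paper itself does for $M2$ in Section 5), but there is one genuine gap at the decisive step: you assert that $\widehat{\overline M}-\mathcal E$ is a \emph{holomorphic} modular form of weight $3/2$ on $\Gamma_0(N)$ and then invoke Sturm. Matching the non-holomorphic parts only shows that the difference lies in $M^!_{3/2}(\Gamma_0(16))$, i.e.\ it is weakly holomorphic; the absence of negative powers of $q$ in \eqref{barM} controls only the cusp $\infty$, and nothing in your argument excludes poles at the other cusps of $\Gamma_0(16)$. Until that is ruled out, Sturm's bound does not apply and the finite coefficient check proves nothing. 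This is exactly where the paper invests its real work: it cancels the non-holomorphic parts against the rank-type mock modular form $\overline f$ of \cite{Br-Lo.5}, multiplies by the weight $1/2$ form $\eta^2(\tau)/\eta(2\tau)$ so that, by \eqref{sptonegf} and \eqref{fdef}, the overpartition factor cancels and the resulting coefficients are divisor-sum-like, hence grow at most polynomially; a weakly holomorphic form with polynomially bounded coefficients must be holomorphic at every cusp, and only then does a finite coefficient comparison give the identity ($4\overline M+\overline f=0$ in the paper; $\widehat{\overline M}=\mathcal E$ in your setup). You would need a comparable argument (polynomial growth, or explicit expansions at all cusps of $\Gamma_0(16)$) before the Sturm step.

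Apart from that, your route differs from the paper's only in the choice of comparison object and in how its eigenform property is justified: the paper identifies $\overline M$ with $-\tfrac14\overline f$ and quotes \cite[Prop.~5.1]{Br-Lo-Os1} for the fact that $\overline f$ is a class number generating function and a $T(\ell^2)$-eigenform with eigenvalue $\ell+1$, whereas you build a class-number Eisenstein form from Zagier's $\mathcal H$ of \cite{Hi-Za1} and use the classical Hurwitz class number relations; both are legitimate, and the second is essentially the argument the paper runs for Proposition \ref{lemma2}. Your closing formal deduction is correct and even slightly sharper than the paper's: from $\overline M\,|\,T(\ell^2)=(\ell+1)\overline M$ and \eqref{higherHeckedef} one gets the exact identity $\overline M\,|\,T(\ell^{2m})-\overline M\,|\,T(\ell^{2m-2})=\ell^m\,\overline M$, and since the coefficients $\overline m(n)$ lie in $\tfrac14\Z$ they are $\ell$-integral for odd $\ell$, giving \eqref{overlineMladic}; the paper phrases this as an induction but the content is the same.
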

\begin{proof}
In the proof of Theorem 1.4 in \cite{Br-Lo-Os1} it is shown that
\[
 \overline{M} + \overline{NH}\in H_{\frac32}\left( \Gamma_0(16)\right),
\]
where
\[
\overline{NH}(\tau):=
 \frac{1}{2 \sqrt{2}\pi i} \int_{-\overline{\tau}}^{i \infty}
 \frac{\eta^2(w)}{\eta(2w)\left(-i(\tau+w) \right)^{\frac32}} dw.
 \]
  Define $\overline{f}$ by
\begin{equation} \label{fdef}
\overline{f}(\tau) := 4\cdot \frac{\eta(2\tau)}{\eta^2(\tau)}\sum_{n \in \mathbb{Z}} \frac{(-1)^nq^{n^2+n}}{\left(1+q^n\right)^2}.
\end{equation}
In Theorem 1.1 of \cite{Br-Lo.5} it is shown that
\[
 \overline{f} - 4 \cdot \overline{NH} \in H_{\frac32}\left( \Gamma_0(16)\right).
\]

  Canceling the non-holomorphic parts, we conclude that $4 \overline{M} +  \overline{f}\in M^!_\frac32(\Gamma_0(16))$.  Using \eqref{fdef} and \eqref{sptonegf},
we see that the coefficients of
\[
\frac{\eta^2(\tau)}{\eta(2\tau)}\left(4\overline{M}(\tau)+\overline{f}(\tau)\right)
\]
have at most polynomial growth.  A weakly holomorphic modular form whose coefficients grow at most polynomially must be holomorphic.
We then prove that  $4\overline{M}+\overline{f} = 0$  by checking sufficiently many coefficients.

Now by \cite[Prop. 5.1]{Br-Lo-Os1},
$\overline{f}$ is a generating function for class numbers and an eigenform for the weight $3/2$ Hecke operator $T(\ell^2)$ with eigenvalue $\ell+1$.  Hence the same is true for $\overline{M}$.
The property \eqref{overlineMladic}  then follows by induction using the second identity in  \eqref{higherHeckedef}.

\end{proof}
\begin{proof}[Proof of Theorem \ref{thm2bis}]
Arguing as in the proof of Theorem \ref{thm1bis}, and noting that the character is trivial, we obtain the following congruences for $m \geq 1$:
\begin{equation}\label{mbarcong}\begin{aligned}
\overline{m} \left(\ell^{2m}n\right) &\equiv  0 \pmod{\ell^m} \quad\quad \text{if \ $\left(\frac{-n}{\ell}\right) = 1$},\\
\overline{m}\left(\ell^{2m+1}n\right) - \overline{m}\left(\ell^{2m-1}n\right) &\equiv  0 \pmod{\ell^m}.\\
\end{aligned}\end{equation}
In order to replace $\overline{m}$ by $\overline{\sptone}$, we must show that the coefficients of  $\overline{M} - \overline{S}$ satisfy the same congruences.
We recall that the  generating function for overpartitions is given by
\[
\overline{P}(\tau) = \sum_{n \geq 0} \overline{p}(n)q^n := \frac{\eta(2 \tau)}{\eta^2(\tau)}.
\]
Differentiating gives
\[
q\frac{d}{dq} \, \overline{P}(\tau) = \sum_{n\geq 1} n\overline{p}(n)q^n=\frac{1}{12}
\frac{\eta(2 \tau)}{\eta^2(\tau)} \Big(E_2(2\tau)-E_2(\tau)\Big).
\]
With
\[
E(\tau):=2E_2(2\tau)-E_2(\tau)\in M_2\Big(\Gamma_0(2)\Big)
\]
and
\begin{equation} \label{hdef}
\overline{h}(\tau):=E(\tau) \frac{\eta(2\tau)}{\eta^2(\tau)}\in M^!_{\frac{3}{2}}\Big(\Gamma_0(16)\Big),
\end{equation}
 we see that
\begin{equation}\label{quarterh}
 \overline{M}(\tau)- \overline{S}(\tau)= 2q\frac{d}{dq}\, \overline{P}(\tau) -\frac14 \cdot \overline h(\tau).
 \end{equation}
It is clear that  the congruences \eqref{mbarcong}    are  satisfied with $\overline{m}(n)$ replaced by $n\overline p(n)$.
Therefore, the key step will be to prove the following
\begin{proposition} \label{thm2}
For any prime $\ell \geq 3$ and any integer $m \geq 1$, we have
\[
\overline{h} | T\left(\ell^{2m}\right) - \overline{h} | T\left(\ell^{2m-2}\right) \equiv 0 \pmod{\ell^m}.
\]
\end{proposition}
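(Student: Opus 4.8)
The plan is to mimic the structure of the proof of Theorem~\ref{thm1} as closely as possible, replacing the mock modular form $M$ by the weakly holomorphic form $\overline{h}$. Since $\overline{h}\in M^!_{3/2}(\Gamma_0(16))$ is genuinely weakly holomorphic (not merely mock), the situation is in fact cleaner: there is no shadow to track, so in the notation of Section~2 I set $F_0=\overline{h}$ and $\chi$ trivial, and I want to show that the form
\[
G_{\ell,m}:=\overline{h}\,\big|\,T(\ell^{2m})-\overline{h}\,\big|\,T(\ell^{2m-2})
\]
(which is $F_{m,\chi}$ in the notation of \eqref{Fdef}, and lies in $M^!_{3/2}(\Gamma_0(16))$ by Hecke-invariance of that space) is $\equiv 0\pmod{\ell^m}$. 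First I would record the principal part of $\overline{h}$ at the cusps, in particular at $\infty$: from \eqref{hdef}, $E(\tau)=2E_2(2\tau)-E_2(\tau)=1+O(q)$ and $\eta(2\tau)/\eta^2(\tau)=q^{-1/24}(1+O(q))$, so after the usual rescaling $\tau\mapsto 24\tau$ the relevant $q$-expansion begins $q^{-1}+\cdots$, exactly as for $M^*$ in the $\spt$ proof. Then, exactly as in \eqref{Mordervanish}, an induction on $m$ using \eqref{Heckeaction} and \eqref{higherHeckedef} shows that the principal part of $G_{\ell,m}$ (in the rescaled variable) is $\ell^m q^{-\ell^{2m}}-\ell^m q^{-1}+O(q^{\ge 0})$, so all of its negative Fourier coefficients, and the constant-type terms forced by the principal part, are divisible by $\ell^m$.

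Next I would clear the pole: set $F_{\ell,m}(\tau):=\eta^{\ell^{2m}}(24\tau)\,G_{\ell,m}(\tau)$ after rescaling so the coefficients are supported on multiples of $24$, and then $H_{\ell,m}(\tau):=F_{\ell,m}(\tau/24)$, so that $H_{\ell,m}\in M^!_{(\ell^{2m}+3)/2}(\Gamma_0(d))$ for an appropriate level $d$ (here $d$ will be a divisor of $16\cdot 24$, or one argues directly on $\Gamma_0(16)$ with the twist; I will determine $d$ by tracking the level through $\eta(24\tau)\in M_{1/2}(\Gamma_0(576),(\tfrac{12}{\bullet}))$ and the support-on-multiples-of-$24$ argument as in Theorem~\ref{thm1}). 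The crucial holomorphy claim — that $H_{\ell,m}$ is actually a holomorphic modular form, not merely weakly holomorphic — I would again get from a Fricke-involution argument: $\overline{h}$, being built from $E(\tau)$ and $\eta(2\tau)/\eta^2(\tau)$ on $\Gamma_0(16)$, has a definite behaviour under $W_{16}$ (one computes it from $W_2$ acting on $E$ and the $\eta$-transformation law), and since $W_{16}$ commutes with $T(\ell^2)$, the principal parts of $G_{\ell,m}$ at the two cusps $0$ and $\infty$ are interchanged up to an explicit scalar; combined with multiplication by $\eta^{\ell^{2m}}$ this forces $H_{\ell,m}$ to be holomorphic everywhere. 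Finally, since $H_{\ell,m}\equiv 0\pmod{\ell^m}$ to order at least $(\ell^{2m}+23)/24$ (from the principal-part computation above, and $\ell\ge 3$ being odd so that $\eta$ is invertible mod $\ell$), Sturm's bound \cite{Sturm} for weight $(\ell^{2m}+3)/2$ on $\Gamma_0(d)$ is exceeded, so $H_{\ell,m}\equiv 0\pmod{\ell^m}$, hence $G_{\ell,m}\equiv 0\pmod{\ell^m}$, which is the assertion.

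The main obstacle I anticipate is the Fricke/holomorphy step: one must pin down the exact eigenvalue (or, if $\overline{h}$ is not an eigenform of $W_{16}$, the precise linear combination of $\overline{h}(\tau)$ and $\overline{h}$ at the other cusp) that results from $W_{16}$, and verify that after multiplying by $\eta^{\ell^{2m}}(24\tau)$ and rescaling the resulting $H_{\ell,m}$ has no pole at any cusp of $\Gamma_0(d)$ — this requires either a clean transformation identity for $E(\tau)\,\eta(2\tau)/\eta^2(\tau)$ under the Atkin--Lehner involutions of level $16$, or, alternatively, a direct estimate of the orders of $G_{\ell,m}$ at all cusps. A secondary point to be careful about is the bookkeeping of the level $d$ and the corresponding Sturm bound, to make sure the vanishing order $(\ell^{2m}+23)/24$ really does exceed $\tfrac{\ell^{2m}+3}{2}\cdot[\mathrm{SL}_2(\Z):\Gamma_0(d)]/12$; as in Theorem~\ref{thm1} this works because the pole of $\overline{h}$ is simple, so the leading $\eta$-power needed is only $\ell^{2m}$, keeping the weight linear in $\ell^{2m}$ while the vanishing order is also linear with a comparable constant, and the numerics go through just as they do in the $\spt$ case. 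Everything else — the induction giving the principal part of $G_{\ell,m}$, the support on multiples of $24$, and the final Sturm argument — is routine and parallels Section~3 verbatim.
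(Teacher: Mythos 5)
Your plan founders on a computation at the very first step: $\eta(2\tau)/\eta^2(\tau)$ is \emph{not} $q^{-1/24}(1+O(q))$. The fractional powers of $q$ cancel exactly, and this quotient is the overpartition generating function $\overline{P}(\tau)=1+2q+\cdots$, so $\overline{h}=E\cdot\overline{P}=1+26q+\cdots$ is holomorphic at $\infty$, with integer exponents supported on \emph{all} $n\ge 0$. Its pole sits at the cusp $0$, not at $\infty$. Consequently every step you import from Section~3 breaks down: there is no analogue of \eqref{Mordervanish} for $\overline{h}$ at $\infty$, the coefficients are not supported on an arithmetic progression modulo $24$, multiplying by $\eta^{\ell^{2m}}(24\tau)$ and substituting $\tau/24$ does not descend to a small group, and --- most importantly --- the Sturm argument has nothing to bite on, because the $\ell^m$-divisibility of the principal part, which is what drives the whole proof in the $\spt$ case, lives at the cusp $0$ rather than at $\infty$.

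The fix is exactly the step you flagged as ``the main obstacle,'' but it must come first, not last. The paper's proof applies the Fricke involution at the outset: $\overline{h}|_{\frac32}W_{16}=\sqrt{8}\,\overline{g}$, where $\overline{g}(\tau)=E(8\tau)\,\eta(8\tau)/\eta^2(16\tau)=q^{-1}+O(q^7)$ is supported on exponents $\equiv 7\pmod 8$. Since $W_{16}$ commutes with the $T(\ell^{2m})$ and preserves $\ell$-integrality for odd $\ell$, the asserted congruence for $\overline{h}$ reduces to the same congruence for $\overline{g}$, and for $\overline{g}$ your architecture does work, with the correct multipliers: one shows $\overline{G}_{\ell,m}:=\overline{g}|T(\ell^{2m})-\overline{g}|T(\ell^{2m-2})=\ell^m q^{-\ell^{2m}}+O(q^7)$, multiplies by $\eta^{2\ell^{2m}}(16\tau)/\eta^{\ell^{2m}}(8\tau)$ (supported on exponents $\equiv 1\pmod 8$), so that the product is supported on multiples of $8$; substituting $\tau/8$ gives a form in $M^!_{\frac{\ell^{2m}+3}{2}}(\Gamma_0(2))$, holomorphy at the cusp $0$ is verified by a second Fricke computation, and Sturm applies because the vanishing order $\frac{\ell^{2m}+7}{8}$ modulo $\ell^m$ exceeds the bound $\frac{\ell^{2m}+3}{8}$ for $\Gamma_0(2)$. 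As written, your proposal cannot be patched without this restructuring, so it has a genuine gap rather than being a correct alternative route.
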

Assuming for the moment that this proposition is true, and writing $\overline h(\tau)=\sum_n c(n) q^n$,
 we conclude  as before that the congruences \eqref{mbarcong} are   satisfied with $\overline{m}(n)$ replaced by $c(n)$.
Then \eqref{quarterh} shows that the congruences are also satisfied by the coefficients of $\overline{S}$, and
Theorem \ref{thm2bis} follows.
\end{proof}

\begin{proof}[Proof of Proposition~\ref{thm2}]
We define $\overline g\in M^!_{\frac32}(\Gamma_0(16))$ by
\begin{equation} \label{defineg}
\overline{g}(\tau):=E(8\tau)\frac{\eta(8\tau)}{\eta^2(16\tau)}=q^{-1}+O\left(q^7\right).
\end{equation}
Recalling the definition \eqref{Fricke} of the involution   $W_{16}$, we find that
\begin{equation} \label{hFrickeg}
\overline{h}|_{\frac{3}{2}}W_{16}(\tau)=(-4i\tau)^{-\frac{3}{2}}\overline{h}\left(-\frac{1}{16\tau}\right)=\sqrt{8}\cdot
\overline{g}(\tau).
\end{equation}
 If $\ell\neq 2$ is prime, then      $W_{16}$   preserves $\ell$-integrality  and commutes with the operators
  $T(\ell^{2m})$.
Therefore Proposition~\ref{thm2} will follow if we can prove that
for     any prime $\ell \geq 3$ and any integer $m \geq 1$, we have
\begin{equation}\label{gclaim}
\overline{g} \big | T(\ell^{2m}) - \overline{g} \big | T(\ell^{2m-2}) \equiv 0 \pmod{\ell^m}.
\end{equation}

The rest of the section is devoted to proving \eqref{gclaim}.
 To begin, define
\begin{equation}\label{glmdef}
\overline{G}_{\ell,m}:=
\overline{g} | T(\ell^{2m})  - \overline{g} | T(\ell^{2m-2})   \in M_{\frac32}^!\left(\Gamma_0(16) \right),
\end{equation}
and
\begin{equation} \label{Flmdef}
\overline{F}_{\ell,m}(\tau):=
\overline{G}_{\ell,m}(\tau)
\frac{\eta^{2\ell^{2m}}(16\tau)}{\eta^{\ell^{2m}}(8\tau)}
\in M_{\frac{\ell^{2m}+ 3}{2}}^!\left( \Gamma_0(16)\right).
\end{equation}
 Then it will suffice to
 show that
\begin{equation}\label{Fcongred}
\overline{F}_{\ell,m}\equiv 0\pmod{\ell^{m}}.
\end{equation}

Using    \eqref{defineg} and \eqref{higherHeckedef}, we conclude by induction that
\[
\overline{g} \left| T\left( \ell^{2m}\right)(\tau)\right.
= \frac{\ell^m}{q^{\ell^{2m}}}  +  \frac{\ell^{m-1}}{q^{\ell^{2m-2}}}
+  \cdots
+  \frac{1}{q} + O \left(q^{7} \right),
\]
which in turn gives
\[
\overline{G}_{\ell,m}(\tau)
=\frac{\ell^m}{q^{\ell^{2m}}}
+O\left(q^7\right).
\]
Moreover,  $\overline{G}_{\ell,m}$
is supported on exponents which are $\equiv 7\pmod{8}$.
Since the form
\[
\frac{\eta^{2\ell^{2m}}(16\tau)}{\eta^{\ell^{2m}}(8\tau)}=q^{\ell^{2m}}+O\Big(q^{\ell^{2m}+8}\Big)
\]
is supported on exponents which are $\equiv 1\pmod{8}$, we conclude that
\begin{equation} \label{inforder}
\overline{F}_{\ell,m}(\tau) =\ell^m+ O\left(q^{\ell^{2m}+7}\right)
\end{equation}
is supported on exponents which are  $\equiv 0\pmod{8}$.
Setting
\begin{equation} \label{Hlmdef}
\overline{H}_{\ell,m}(\tau):=\overline{F}_{\ell,m}\left(\tau/8\right),
\end{equation}
we find that
$\overline{H}_{\ell,m} \in M^!_{\frac{\ell^{2m}+3}{2}} (\Gamma_0(2)).$
It is clear that $\overline{H}_{\ell,m}$ is holomorphic on $\field{H}$ and at $\infty$.
A computation involving \eqref{Hlmdef}, \eqref{Flmdef}, \eqref{glmdef}, \eqref{hFrickeg} and \eqref{hdef}, together with the fact that the Fricke involution \eqref{Fricke} commutes with the Hecke operators $T(\ell^{2m})$,
shows that
$
(\sqrt{2}\tau)^{-\left(\frac{\ell^{2m}+3}{2}\right)}\overline{H}_{\ell,m}\left(\frac{-1}{2\tau}\right)
$
is holomorphic at $\infty$.  Thus $\overline{H}_{\ell,m}$ is holomorphic at the cusp $0$, and
we conclude that $\overline{H}_{\ell,m} \in M_{\frac{\ell^{2m}+3}{2}} (\Gamma_0(2))$.
Using \eqref{inforder}, we see that $\overline{H}_{\ell,m}$ vanishes modulo $\ell^m$  at infinity to order at least $\frac{\ell^{2m}+7}{8}$.
By Sturm's criterion \cite{Sturm}, $\overline{H}_{\ell,m}$ vanishes identically modulo $\ell^{m}$.  This establishes \eqref{Fcongred}, and so finishes the proof of Proposition~\ref{thm2}.

 \end{proof}

\section{Proof of Theorem \ref{thm3}}

We begin with a proposition.
\begin{proposition} \label{lemma2}
Let $M2$ be defined as in \eqref{M2def}.  Then for each odd prime $\ell$, $M2$ is an eigenform for the Hecke operator $T(\ell^2)$ of weight $3/2$ and trivial character,
with eigenvalue $\ell+1$.  Moreover, for all $m \geq 1$ we have
\begin{equation} \label{M2ladic}
M2 | T\left(\ell^{2m}\right) - M2 | T\left(\ell^{2m-2}\right) \equiv 0 \pmod{\ell^m}.
\end{equation}
\end{proposition}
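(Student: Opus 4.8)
The plan is to follow the proof of Proposition~\ref{lemma1} essentially verbatim, the only genuine work being to locate the right auxiliary modular object attached to $M2$. First I would show that $M2$ is the holomorphic part of a harmonic weak Maass form of weight $3/2$: that is, exhibit a non-holomorphic Eichler integral $N2$ of a weight-$1/2$ unary $\eta$-quotient (the analogue of $NH$ for $M$ and of $\overline{NH}$ for $\overline{M}$) such that $M2 + N2 \in H_{\frac32}(\Gamma_0(N))$ for the appropriate level $N$ (a power of $2$, dictated by the arguments $8\tau$ and $16\tau$ appearing in \eqref{M2def}). This should come out of the generating function \eqref{mtwosptgf} by the same Appell--Lerch/partial-theta manipulations used for $\spt$ in \cite{Br} and for $\overline{\sptone}$ in \cite{Br-Lo-Os1} and \cite{Br-Lo.5}; the Eisenstein correction $\frac{1}{24}\frac{\eta(8\tau)}{\eta^2(16\tau)}\left(E_2(16\tau)-E_2(8\tau)\right)$ in \eqref{M2def} is chosen precisely to cancel the quasi-modular anomaly, exactly as $E_2(\tau)-4E_2(2\tau)$ does in \eqref{barM}.

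Next, in parallel with the definition \eqref{fdef} of $\overline{f}$, I would write down a bilateral Lambert-type series $f2$ --- an $\eta$-quotient times $\sum_{n\in\Z}(-1)^n q^{2n^2+n}/(1-q^{2n})^2$, or whatever variant is suggested by \eqref{mtwosptgf} --- and check, by computing its shadow, that a suitable linear combination $\alpha\, M2 + \beta\, f2$ has vanishing non-holomorphic part, hence lies in $M^!_{\frac32}(\Gamma_0(N))$. To promote this to holomorphy I would multiply by an $\eta$-quotient clearing the poles at the cusps, observe from the explicit $q$-series that the resulting Fourier coefficients grow only polynomially, and invoke the fact that a weakly holomorphic modular form with polynomially bounded coefficients is holomorphic. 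A finite Fourier-coefficient check (Sturm's criterion \cite{Sturm}) then pins down the exact identity relating $M2$ and $f2$ --- presumably $M2$ is, up to a scalar, the class-number generating function $f2$. Since $f2$ is known (the weight-$3/2$ Eisenstein/class-number phenomenon, as for $\overline{f}$ in \cite{Br-Lo-Os1}) to be a Hecke eigenform for $T(\ell^2)$ of weight $3/2$ and trivial character with eigenvalue $\ell+1$ for every odd prime $\ell$, the same then holds for $M2$.

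Finally, the $\ell$-adic congruence \eqref{M2ladic} is a formal consequence of the eigenvalue, exactly as in Proposition~\ref{lemma1}. Using the recursion $T(\ell^{2m}) = T(\ell^2)\,T(\ell^{2m-2}) - \ell\, T(\ell^{2m-4})$ from \eqref{higherHeckedef}, the eigenvalues $c_m$ of $M2$ under $T(\ell^{2m})$ satisfy $c_m = (\ell+1)c_{m-1} - \ell\, c_{m-2}$ with $c_0 = 1$ and $c_1 = \ell+1$; the characteristic roots are $1$ and $\ell$, so $c_m = 1 + \ell + \cdots + \ell^m$ and $c_m - c_{m-1} = \ell^m$. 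Hence $M2\,\big|\,T(\ell^{2m}) - M2\,\big|\,T(\ell^{2m-2}) = \ell^m\, M2 \equiv 0 \pmod{\ell^m}$.

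I expect the main obstacle to be the first two steps rather than the last: correctly identifying the companion series $f2$ and verifying that its shadow matches that of $M2$ exactly --- including the scalar constants, the multiplier system, and the precise level --- which requires careful bookkeeping of the theta-type pieces in \eqref{mtwosptgf}. Once $M2$ is recognized as (a multiple of) a class-number generating function, everything else, including the transfer of congruences to $\mtwospt(n)$ in Theorem~\ref{thm3}, is routine and parallels the argument already carried out via Proposition~\ref{thm2}.
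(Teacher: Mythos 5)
Your plan is correct and its architecture is the same as the paper's: realize $M2$ as the holomorphic part of a weight $3/2$ harmonic Maass form, identify it with a class-number generating function by cancelling non-holomorphic parts, deduce the $T(\ell^2)$-eigenform property with eigenvalue $\ell+1$, and obtain \eqref{M2ladic} from the recursion \eqref{higherHeckedef}; your explicit computation of the $T(\ell^{2m})$-eigenvalues $c_m=1+\ell+\cdots+\ell^m$, so that the difference in \eqref{M2ladic} equals $\ell^m\,M2$, is exactly what the paper's induction yields. The one place where the routes differ is the middle step. You propose to manufacture a new Lambert-series companion $f2$ out of \eqref{mtwosptgf}, compute its shadow, and match constants, in strict analogy with $\overline{f}$ in \eqref{fdef}; the paper introduces no new mock-modular object. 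The completion of $M2$, with non-holomorphic part $\overline{NH_2}$ an Eichler integral of $\eta^2(16w)/\eta(8w)$, is quoted from \cite{BLO}, and the companion with the same shadow is obtained by sieving the classical Zagier--Eisenstein series \eqref{ZagierEisenstein} to the progression $7\pmod 8$, using the identity $\sum_{n\equiv 3\pmod 8}3H(n)q^n=\eta^6(16\tau)/\eta^3(8\tau)$ to remove the $3\pmod{8}$ piece; cancelling non-holomorphic parts and arguing as in Proposition~\ref{lemma1} gives the clean identity \eqref{M2equalsH}, $M2(\tau)=\sum_{n\geq 1}H(8n-1)q^{8n-1}$, so your ``presumably a class-number generating function'' is exactly right. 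Also, where you cite the eigenform property as known, the paper proves it: $\Theta$ is a weight $1/2$ Hecke eigenform, Lemma 7.4 of \cite{Br-On1} gives $\mathcal{H}\,|\,T(\ell^2)-(1+\ell)\mathcal{H}\in M^!_{3/2}(\Gamma_0(4))$, polynomial growth of $H(n)$ forces holomorphy, and the one-dimensionality of $M_{3/2}(\Gamma_0(4))$ together with a constant-term comparison forces the difference to vanish; one should also note (as you do not) that the eigenform property passes from $\mathcal{H}$ to the sieved series $M2$ because $\ell^2\equiv 1\pmod 8$, so the Hecke action \eqref{Heckeaction} preserves the support condition on exponents. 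In short, your route would work, but it requires locating and verifying a new mock-theta-type identity and an external eigenform reference, whereas the paper's use of the Zagier--Eisenstein series makes both steps essentially classical.
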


\begin{proof}
From Theorems 6.3 and 7.1  and equation (1.2) of \cite{BLO}
we know that
\begin{equation}\label{M2maass}
M2
+\overline{NH_2} \in H_{\frac32}\left(\Gamma_0(16)\right),
\end{equation}
where
\[
\overline{NH_2}(\tau):=
\frac{1}{4 \sqrt{2} \pi i} \int_{- \overline{\tau}}^{i \infty}
\frac{\eta^2(16 w)}{\eta(8w)\left(-i (\tau+w) \right)^{\frac32}} dw .
\]
In fact, \eqref{M2maass} remains true with  $M2(\tau)$ replaced by the generating function
\[
\sum_{n \geq 1} H(8n-1)q^{8n-1},
\]
where $H(n)$ is the Hurwitz class number.  To see this, recall the Zagier-Eisenstein series \cite[Section 2.2]{Hi-Za1},
\begin{equation} \label{ZagierEisenstein}
\mathcal{F}(\tau): = \sum_{n \equiv 0,3 \pmod{4}} H(n)q^n + \frac{1}{8 \sqrt{2} \pi i} \int_{- \overline{\tau}}^{i \infty}
\frac{\Theta(w)}{\left(-i (\tau+w) \right)^{\frac32}} dw \in H_{\frac32}(\Gamma_0(4)),
\end{equation}
where $\Theta(\tau):= \sum_{n\in \Z}q^{n^2}$.
Then
\[
\frac{1}{2}\left(F(\tau)-F(\tau+1/2)\right) =
\sum_{n \equiv 3 \pmod{4}} H(n)q^n +
\overline{NH_2}(\tau)  \in H_{\frac32}(\Gamma_0(16)).
\]
It is known that the generating function for $H(8n+3)$ is a modular form (see, for example,  pages $92-93$ of \cite{Hi-Za1}).  In fact, we have
\[
\sum_{n \equiv 3 \pmod{8}} 3H(n)q^{n} = \frac{\eta^6(16\tau)}{\eta^3(8\tau)} \in M_{\frac32}(\Gamma_0(16)),
\]
and so
\begin{equation} \label{Zagier2}
\sum_{n \equiv 7 \pmod{8}} H(n)q^n +\overline{NH_2}(\tau) \in H_{\frac32}(\Gamma_0(16)),
\end{equation}
which is the desired counterpart to \eqref{M2maass}.  Canceling the non-holomorphic parts in \eqref{M2maass} and \eqref{Zagier2}, and arguing as in the proof of Proposition \ref{lemma1}, we find that
\begin{equation} \label{M2equalsH}
M2(\tau) = \sum_{n \geq 1} H(8n-1)q^{8n-1}.
\end{equation}

Next let $\mathcal{H}(\tau)$ denote the generating function for $H(n)$ in \eqref{ZagierEisenstein}.  Recall that for each odd prime $\ell$, $\Theta(\tau)$ is an eigenform of the Hecke operator of index $\ell^2$ and weight $1/2$ whose eigenvalue is $(1+\ell^{-1})$.
An application of Lemma $7.4$ of \cite{Br-On1} gives that
\[
\mathcal{H} | T(\ell^2) -  (1+\ell) \mathcal{H} \in  M_{\frac{3}{2}}^{!}(\Gamma_0(4)).
\]
Since $H(n)$ grows polynomially we may replace $M_{\frac{3}{2}}^{!}(\Gamma_0(4))$ by $M_{\frac{3}{2}}(\Gamma_0(4))$ above.   The latter is a one-dimensional space generated by $\Theta^3(\tau)$.  Comparing constant terms we conclude that $\mathcal{H} | T(\ell^2)  = (1+\ell) \mathcal{H}$ and by \eqref{M2equalsH} the same is then true for $M2(\tau)$ in place of $\mathcal{H}(\tau)$.  
The property \eqref{M2ladic}  then follows from \eqref{higherHeckedef} using induction.
\end{proof}
We now turn to the proof of the congruences for $\mtwospt$ in Theorem \ref{thm3}.
\begin{proof}[Proof of Theorem \ref{thm3}]
Arguing as before, we obtain for $m \geq 1$ the congruences
\begin{equation} \label{m2cong}
\begin{aligned}
\widetilde{m2}\left(\ell^{2m}n\right) &\equiv 0 \pmod{\ell^m} \quad\quad \text{if\  $\left(\frac{-n}{\ell}\right) = 1$},\\
\widetilde{m2}\left(\ell^{2m+1}n\right) - \widetilde{m2}\left(\ell^{2m-1}n\right) &\equiv  0 \pmod{\ell^m}.\\
\end{aligned}
\end{equation}
In order to replace $\widetilde{m2}$ by $\widetilde{s2}$ (and then by $\mtwospt$), we must show that $M2 - S2$ satisfies the same congruences.
To   this end, we start with the generating function for partitions without repeated odd parts, which is given by
\[
R(\tau) = \sum_{n \geq 0}(-1)^{n}M2(n)q^{8n-1} := \frac{\eta(8\tau)}{\eta^2(16\tau)}.
\]
Differentiating, we obtain
\[
q\frac{d}{dq}R(\tau)   = \sum_{n \geq 0} (8n-1) (-1)^n M2(n)q^{8n-1} =
\frac{\eta(8 \tau)}{\eta^2(16 \tau)}
\left(\frac{-4E_2(16\tau)+E_2(8\tau)}{3}\right).
\]
We conclude that
\[
M2(\tau)-S2(\tau)  = \frac{1}{16}\left(\overline{g}(8\tau)  +  q\frac{d}{dq}R(\tau)\right),
\]
where $\overline{g}$ was defined in \eqref{defineg}.  By \eqref{gclaim} it follows that the congruences \eqref{m2cong} continue to hold  with $\widetilde{m2}(n)$ replaced by
\[\widetilde{s2}(n) = (-1)^{\frac{n+1}{8}}\operatorname{M2spt}\left(\frac{n+1}{8}\right),\]
 and so the theorem is proved.
\end{proof}

\end{document}